\newtheorem{theorem}{Theorem}[section]
\newtheorem{lemma}[theorem]{Lemma}
\theoremstyle{definition}
\newtheorem{definition}[theorem]{Definition}
\newtheorem{corollary}[theorem]{Corollary}
\newtheorem{remark}[theorem]{Remark}
\numberwithin{equation}{section}
\def\C{\mathbb{C}}
\def\A{\mathcal{A}}
\def\I{\mathcal{I}}
\def \h{\mathcal{H}}\def \G{\mathcal{G}}
\def \T{\mathcal{T}}
\def \F{\mathcal{F}}
\def\x{\mathbf{x}}
\def \la{\lambda}
\begin{document}

\title[A hypergraph Heilmann--Lieb theorem]
{A hypergraph Heilmann--Lieb theorem}

\date{\today}

\keywords{Heilmann--Lieb theorem; Matching polynomial; Hypergraph}

\author[Jiang-Chao Wan, Yi Wang, Yi-Zheng Fan]{Jiang-Chao Wan, Yi Wang, Yi-Zheng Fan}
\address{Center for Pure Mathematics, School of Mathematical Sciences,\\ Anhui University, Hefei 230601, Anhui, China}
\email{wanjc@stu.ahu.edu.cn, wangy@ahu.edu.cn, fanyz@ahu.edu.cn}
\thanks{{\it Corresponding author.} Yi Wang}
\thanks{{\it Funding.} Supported by the National Natural Science Foundation of China (No. 12171002)}

\begin{abstract}
The Heilmann--Lieb theorem is a fundamental theorem in algebraic combinatorics which provides a characterization of the distribution of the zeros of matching polynomials of graphs.
In this paper, we establish a hypergraph Heilmann--Lieb theorem as follows.
Let $\h$ be a connected $k$-graph with maximum degree ${\Delta}\geq 2$
and let $\mu(\h, x)$ be  its matching polynomial.
We show that the zeros (with multiplicities) of $\mu(\h, x)$ are invariant under a rotation of an angle $2\pi/{\ell}$ in the complex plane for some positive integer $\ell$ and $k$ is the maximum integer with this property.
We further prove that the maximum modulus $\lambda(\h)$ of all the zeros of $\mu(\h, x)$ is a simple root of $\mu(\h, x)$ and satisfies
$$\Delta^{\frac{1}{  k}} \leq \lambda(\h)< \frac{k}{k-1}\big((k-1)(\Delta-1)\big)^{\frac{1}{  k}}.$$
To achieve these, we prove that $\mu(\h, x)$
divides the matching polynomial of the $k$-walk-tree of $\h$,
which generalizes a classical result due to Godsil
from graphs to hypergraphs.
\end{abstract}

\maketitle

\section{Introduction}\label{section1}

The Heilmann--Lieb theorem~\cite{Heilmann} is a fundamental theorem in algebraic combinatorics which provides a characterization of the distribution of the zeros of matching polynomials of graphs.
To state it, let us recall the definition of the matching polynomial.
Given an $n$-vertex graph $G$, a {\em matching} in $G$ is a subset of its edges such that not two share a common vertex.
Write $p(G,r)$ for the number of matchings in $G$ consisting of $r$ edges.
In their celebrated paper~\cite{Heilmann}, Heilmann and Lieb defined the {\it matching polynomial} of $G$ to be the polynomial
 $$
\mu(G,x)=\sum_{r\geq 0}(-1)^rp(G,r) x^{n-2r}.
 $$

\begin{theorem}[Heilmann--Lieb~\cite{Heilmann}]
\label{Heilmanntheorem}
Let $G$ be a graph with maximum degree $\Delta(G) \geq 2$.
Then the zeros (with multiplicities) of $\mu(G,x)$ are symmetrically distributed about the origin
and lie in the interval
$\big(-2\sqrt{ {\Delta(G)} -1}, 2\sqrt{ {\Delta(G)} -1}\big)$.
\end{theorem}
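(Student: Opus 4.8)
The plan is to deduce both assertions from the path‑tree construction together with Theorem~\ref{MLidentity}; for graphs this is essentially Godsil's argument, and it is the $k=2$ prototype of what the paper does for general $k$. (The original Heilmann--Lieb proof instead runs a direct induction on $G$ via the deletion recurrence, establishing real‑rootedness, interlacing and the bound simultaneously; I will follow the path‑tree route.) The symmetry about the origin is immediate from \eqref{definitionmp}: with $k=2$ every exponent $|V(G)|-2r$ occurring in $\mu(G,x)$ is congruent to $|V(G)|$ modulo $2$, so $\mu(G,-x)=(-1)^{|V(G)|}\mu(G,x)$ and the zero multiset of $\mu(G,x)$ is invariant under $\theta\mapsto-\theta$. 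For the location of the zeros we may first assume $G$ connected, since $\mu(G,x)=\prod_i\mu(G_i,x)$ over the components $G_i$, each $G_i$ has $\Delta(G_i)\le\Delta(G)$, and a component that is a single vertex or a single edge contributes only zeros among $0,\pm1$, which lie in $(-2\sqrt{\Delta(G)-1},2\sqrt{\Delta(G)-1})$ precisely because $\Delta(G)\ge2$.

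So fix a connected $G$ with $\Delta:=\Delta(G)\ge2$ and a vertex $u$, and set $T:=\T(G,u)$. Godsil's identity, in the form $\mu(G-u,x)\,\mu(T,x)=\mu(G,x)\,\mu(T-u,x)$, together with the fact that $T-u$ is a disjoint union of path trees of $G-u$, yields by induction on $|V(G)|$ the divisibility $\mu(G,x)\mid\mu(T,x)$; in particular every zero of $\mu(G,x)$ is a zero of $\mu(T,x)$. Since $T$ is a tree, Theorem~\ref{MLidentity} identifies $\mu(T,x)$ with the characteristic polynomial of the symmetric matrix $A(T)$, so all its zeros are real eigenvalues of $A(T)$; and a vertex of $T$ corresponding to a path $(u=v_0,v_1,\dots,v_j)$ has degree at most $\deg_G(v_j)\le\Delta$, so $\Delta(T)\le\Delta$. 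Everything therefore reduces to showing: every finite tree $T$ with $\Delta(T)\le\Delta$ (and $\Delta\ge2$) has all eigenvalues in $(-2\sqrt{\Delta-1},2\sqrt{\Delta-1})$; by the symmetry above it suffices to prove $\mu(T,x)>0$ for every real $x\ge2\sqrt{\Delta-1}$.

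For this, root $T$ at a leaf and work upward from the leaves. For a vertex $v$ let $T_v$ be the subtree rooted at $v$ and set $h_v(x):=\mu(T_v,x)/\mu(T_v-v,x)$; the deletion recurrence $\mu(H,x)=x\,\mu(H-v,x)-\sum_{w\sim v}\mu(H-v-w,x)$ gives $h_v(x)=x-\sum_{c}1/h_c(x)$, the sum over the children $c$ of $v$ (valid wherever the $h_c$ do not vanish), and unwinding this yields $\mu(T,x)=\prod_{v\in V(T)}h_v(x)$. Put $h_*(x):=\tfrac12\big(x+\sqrt{x^2-4(\Delta-1)}\big)$, the larger root of $h^2-xh+(\Delta-1)=0$, so that $x-(\Delta-1)/h_*(x)=h_*(x)$ and $1\le\sqrt{\Delta-1}\le h_*(x)\le x$ for $x\ge2\sqrt{\Delta-1}$. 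A leaf $v\neq\mathrm{root}$ has $h_v(x)=x\ge h_*(x)$, and a non‑root vertex has at most $\Delta-1$ children, so inductively $h_v(x)\ge x-(\Delta-1)/h_*(x)=h_*(x)>0$ for every non‑root $v$; the root (a leaf) has one child $c_0$, so $h_{\mathrm{root}}(x)\ge x-1/h_*(x)\ge1>0$ (using $x\ge2$ and $h_*(x)\ge1$). Hence $\mu(T,x)=\prod_v h_v(x)>0$ on $[2\sqrt{\Delta-1},\infty)$, as wanted. (One can instead embed $T$ as an induced subtree of the infinite $\Delta$‑regular tree $\mathbb{T}_\Delta$, whose adjacency operator has norm exactly $2\sqrt{\Delta-1}$ and empty point spectrum, and get $\rho(A(T))<2\sqrt{\Delta-1}$ from a Rayleigh‑quotient comparison.)

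The genuinely delicate point is the \emph{strict} inequality at the endpoint $x=2\sqrt{\Delta-1}$: the non‑strict bound $\rho(A(T))\le2\sqrt{\Delta-1}$ is soft and is attained in the limit by the infinite regular tree, so strictness must exploit the finiteness of $T$ essentially — in the recursion this is exactly why one roots at a leaf, so the root contributes the single term $1/h_{c_0}$ and the estimate $x-1/h_*(x)\ge1$ has slack. It is also worth noting that the hypothesis $\Delta(G)\ge2$ enters only through $2\sqrt{\Delta-1}\ge2$, which simultaneously absorbs the degenerate components in the first reduction and supplies the slack just mentioned; for $\Delta=1$ the statement is simply false, since $\mu$ of a single edge has zeros $\pm1$.
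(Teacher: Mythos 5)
Your proof is correct. The paper does not actually prove this statement — it is quoted from \cite{Heilmann} — but your route is essentially the $k=2$ instance of the paper's own proof of its generalization, Theorem~\ref{maintheorem22}: reduce to the path tree via the divisibility $\mu(G,x)\mid\mu(T(G,u),x)$ (Theorem~\ref{d2}), use Theorem~\ref{MLidentity} to get realness, and run a recursive lower bound on the ratios $\mu(\cdot,x)/\mu(\cdot-v,x)$ exactly as in Lemma~\ref{lemma} and Theorem~\ref{maintheoremsec4}; your observation that rooting at a leaf is what yields strictness at $x=2\sqrt{\Delta-1}$ matches the role of the minimum-degree vertex in the paper's Theorem~\ref{maintheoremsec4}.
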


The Heilmann--Lieb theorem has many impressive applications 
including spectral graph theory~\cite{Farrell,LovaszPe,Marcus}, combinatorics~\cite{Chen,Godsil4,KahnKim}, and statistical physics~\cite{Heilmann1,Heilmann}.
We refer readers to the textbooks~\cite{Godsil, Lovasz} for more background and history on matching polynomial theory, and see~\cite{Makowskya,Shi} for related graph polynomials.

The bound $2\sqrt{ {\Delta(G)} -1}$ for zeros of the matching polynomial of graphs in Theorem~\ref{Heilmanntheorem} is closely related to the second largest eigenvalues of graphs. If $G$ is a $d$-regular graph, then $d$ is always the largest adjacency eigenvalue of $G$, called the trivial eigenvalue of $G$.  The well-known Alon--Boppana Theorem~\cite{Alon} states that for all $d\geq2$ and $\varepsilon> 0$, there are
only finitely many $d$-regular graphs whose second largest eigenvalue is at most $2\sqrt{d-1}-\varepsilon$.
In addition,  Friedman~\cite{Friedman2008}
proved that for every $\varepsilon> 0$, with high probability,  random
$d$-regular graphs have the second largest eigenvalue smaller than $2\sqrt{d-1}+\varepsilon$, which was  conjectured by Alon~\cite{Alon}.  Motivated by the above results, Lubotzky, Phillips, and Sarnark~\cite{LPS} introduced the concept of {\it Ramanujan} graphs: $d$-regular graphs whose non-trivial eigenvalues are between $-2\sqrt{d -1}$ and $ 2\sqrt{d -1}$. It plays an important role in the study of the linear expander of graphs~\cite{Hoory}.
Based on Theorem~\ref{Heilmanntheorem},  Marcus, Spielman, and Srivastava~\cite{Marcus} showed that  there are infinitely many bipartite Ramanujan graphs  by  the breakthough technique called  interlacing families.
See~\cite{Hall,Marcus} for more details and related topics.

A {\it $k$-uniform hypergraph}  $\h=(V(\h), E(\h))$ consists of a  vertex set $V(\h)$ and an edge set $E(\h)$,
where each $e\in E(\h)$ is a $k$-element subset of $V(\h)$. In this paper, we use the term  ``$k$-graph'' for the case of $k$-uniform hypergraphs with $k \geq 2$, and the term ``graph'' exclusively for $k=2$.
A {\em matching} in $\h$ is a set of vertex-disjoint edges,
and we denote by $p(\h,r)$ the number of matchings in $\h$ consisting of $r$ edges.
Recently, to investigate the spectral radius of adjacency tensor of $k$-trees, Su, Kang, Li, and Shan~\cite{Suejc} introduced the following {\it matching polynomial} of a $k$-graph $\h$:
 $$
\mu(\h,x)=\sum_{r\geq 0}(-1)^rp(\h,r) x^{|V(\h)|-kr},
 $$
which is a minor adjustment based on the definition of a matching polynomial introduced by Zhang, Kang, Shan and Bai~\cite{Zhang}, and Clark and Cooper~\cite{ClarkCooper}.
However, most of their results~\cite{ClarkCooper,Suejc,Zhang} focus on the spectra of the adjacency tensor of $k$-trees but not on the properties of the matching polynomial itself.
This prompts us to explore more useful and interesting properties of the matching polynomial of $k$-graphs.

Inspired by the above classical works,
the purpose of this paper is to establish a hypergraph Heilmann--Lieb theorem.
To state it, we need to introduce more notation.
A real polynomial $f(x)$ is called {\it $\ell$-symmetric} if
\begin{equation}
\label{zerosymmetricdefi}f(x)=x^tg(x^\ell)
\end{equation}
for some nonnegative integer $t$ and some real polynomial $g(x)$.
In other words, $f(x)$ is $\ell$-symmetric if and only if its zeros remain invariant under a rotation of an angle $2\pi/\ell$ on the complex plane.
The maximum number $\ell$ such that~\eqref{zerosymmetricdefi} holds is called the  {\it cyclic index} of $f(x)$.
Let $\lambda(\h)$ denote the maximum modulus  of all zeros of $\mu(\h, x)$.
Clearly, Theorem \ref{Heilmanntheorem} provides that for a graph $G$  with $\Delta(G) \geq 2$, the cyclic index of $\mu(G, x)$ is $2$ and $\lambda(G)\leq 2\sqrt{ {\Delta(G)} -1}$.

We are now ready to present the main result of this paper,
which provides a characterization of the distribution of the zeros of matching polynomials of $k$-graphs.
In particular, 
it implies that when $k\geq 3$ the matching polynomial of a $k$-graph must contain a nonreal zero.

\begin{theorem}\label{maintheorem22}
Let $\h$ be a connected $k$-graph with maximum degree ${\Delta}\geq  2$.
Then the cyclic index of $\mu(\h, x)$ is $k$.
Moreover, the maximum modulus $\lambda(\h)$ of all the zeros of $\mu(\h, x)$ is a simple root of $\mu(\h, x)$ and satisfies
	\begin{equation}\label{mr}
	\Delta^{1\over k}\leq \lambda(\h)< \frac{k}{k-1}\big((k-1)(\Delta-1)\big)^{1\over k}.
	\end{equation}
\end{theorem}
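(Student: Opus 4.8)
The plan is to establish the three assertions of Theorem~\ref{maintheorem22} in turn, using the path tree identity and Theorem~\ref{Mthm} as the main engine. First I would fix a vertex $u$ of $\h$ and invoke the claimed identity
\[
\frac{\mu(\h-u,x)}{\mu(\h,x)} = \frac{\mu(\T(\h,u)-u,x)}{\mu(\T(\h,u),x)},
\]
where $\T(\h,u)$ is the path tree, a $k$-tree. Combining this with Theorem~\ref{Mthm} applied to $\T(\h,u)$, the quantity $\lambda(\h)$ should be controlled by the spectral radius of the adjacency tensor of $\T(\h,u)$; in particular $\lambda(\h)\le\lambda(\T(\h,u))$, and the latter is a simple root of $\mu(\T(\h,u),x)$ equal to that spectral radius. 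I expect that a Perron--Frobenius type argument (for weakly irreducible nonnegative tensors) gives simplicity of the largest-modulus root of $\mu(\T(\h,u),x)$, and then one transports simplicity back to $\mu(\h,x)$ by tracking cancellation in the rational identity: since the identity forces $\lambda(\h)$ to appear as a pole of the right-hand side that is not cancelled, it is a simple root of $\mu(\h,x)$. Some care is needed to run this over all choices of $u$ and to use connectedness of $\h$.

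For the cyclic index, I would first show $\mu(\h,x)$ is $k$-symmetric directly from the definition~\eqref{definitionmp}: every monomial has exponent $|V(\h)|-kr$, so all exponents are congruent mod $k$, giving $\mu(\h,x)=x^t g(x^k)$ with $t=|V(\h)|\bmod k$. Hence the cyclic index is a multiple of $k$; equivalently, the zeros are invariant under rotation by $2\pi/k$. To see the cyclic index is exactly $k$ (not a larger multiple), I would use that $\lambda(\h)$ is a simple real root together with the rotation invariance: if the cyclic index were $\ell=km$ with $m\ge 2$, the full orbit of $\lambda(\h)$ under rotation by $2\pi/\ell$ would consist of $\ell$ distinct simple roots, but since $\h$ is connected with $\Delta\ge 2$ it has at least one edge, so $p(\h,1)>0$ and $\mu$ is not a pure power of $x$; a counting or positivity argument on the coefficient $p(\h,1)$ (or on the leading/subleading structure forced by $\Delta\ge 2$) should rule this out, pinning the cyclic index to $k$.

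For the inequalities in~\eqref{mr}, the lower bound $\Delta^{1/k}\le\lambda(\h)$ should follow by exhibiting a vertex $v$ of degree $\Delta$ and comparing $\mu(\h,x)$ with the matching polynomial of a small subconfiguration (a ``star'' of $\Delta$ edges through $v$, or the single vertex together with its incident edges), whose largest root is exactly $\Delta^{1/k}$, then using a monotonicity/interlacing principle for $\lambda$ under taking such subhypergraphs — this is where Theorem~\ref{Mthm} and the path tree reduction to $k$-trees are convenient, since for $k$-trees the adjacency-tensor spectral radius is monotone under subtrees. For the upper bound, I would bound $\lambda(\T(\h,u))$, hence $\lambda(\h)$, via the spectral radius of the adjacency tensor of the path tree: the path tree has maximum degree at most $\Delta$, but crucially each vertex other than the root has ``backward degree'' $1$, so a weighted Perron vector estimate (à la the Heilmann--Lieb continued-fraction bound, adapted to tensors) yields a bound governed by $(k-1)(\Delta-1)$ rather than $\Delta$, producing the factor $\frac{k}{k-1}\big((k-1)(\Delta-1)\big)^{1/k}$. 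The main obstacle I anticipate is this last step: making the tensor Perron--Frobenius/continued-fraction estimate on the infinite-in-spirit path tree precise enough to get the sharp constant $\frac{k}{k-1}$, since unlike the graph case ($k=2$) one cannot simply use real-rootedness and three-term recurrences — one must instead work with the nonlinear eigenvalue equation of the adjacency tensor and carefully choose the Perron vector's decay ratio along the tree.
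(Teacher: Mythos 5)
Your overall architecture --- path tree, Theorem \ref{Mthm}, the identity \eqref{pathtreesimplegraph} and the divisibility $\mu(\h,x)\mid\mu(\T(\h,u),x)$ --- is the paper's, and two of your deviations are legitimate alternatives. For the cyclic index, the elementary observation you gesture at (since $p(\h,0)=1$ and $p(\h,1)=|E(\h)|>0$, the exponents $|V(\h)|$ and $|V(\h)|-k$ both occur, so any $\ell$ with $\mu(\h,x)=x^tg(x^\ell)$ must divide $k$) already pins the cyclic index to $k$ without any spectral input; the paper instead routes this through the spectral symmetry of weakly irreducible nonnegative tensors, which buys the extra fact that there are exactly $k$ zeros of maximum modulus. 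For the upper bound, bounding $\rho(\T(\h,u))$ directly is exactly the alternative the paper itself mentions in Section 5 (apply Theorem \ref{Lupperbound} to the path tree, whose maximum degree is at most $\Delta$), whereas the paper's own proof is an induction on the ratios $\mu(\h,x)/\mu(\h-u,x)$ using only the vertex-deletion recurrence; either works, but you leave the analytic heart of that step (the sharp constant $\tfrac{k}{k-1}$) entirely unproved, and the lower bound via the star and monotonicity of $\rho$ on subtrees is precisely Corollary \ref{coro22}.

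The genuine gap is your treatment of the simplicity of $\lambda(\h)$. You propose to obtain simplicity of the largest root of $\mu(\T(\h,u),x)$ from Perron--Frobenius for tensors and then transport it through $\mu(\T(\h,u),x)=\mu(\h,x)\mu(\F,x)$. The transport step is fine, but the first step does not exist as stated: the Perron--Frobenius theorem for weakly irreducible nonnegative tensors gives uniqueness of the positive eigenvector and identifies $\rho$ as the eigenvalue carrying it, but it does \emph{not} give algebraic simplicity of $\rho$ as a root of the tensor's characteristic polynomial, and a fortiori says nothing about the multiplicity of $\rho(\T)$ as a root of $\mu(\T,x)$ --- Theorem \ref{Mthm} only matches zero \emph{sets}, not multiplicities. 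The paper's actual argument is of a different nature and you would need something like it: by Lemma \ref{basiclemma222}, $\frac{d}{dx}\mu(\h,x)=\sum_{v\in V(\h)}\mu(\h-v,x)$, and the strict inequality $\lambda(\h)>\lambda(\h-v)$ for every $v$ (obtained by comparing $\rho(\T(\h,v))$ with the spectral radius of the proper subtree $\T(\h_1,v_1)$ via Lemma \ref{Perron-Frobeniuslemma}) makes every summand strictly positive at $x=\lambda(\h)$, so the derivative does not vanish there and the root is simple. Without such an argument the simplicity claim --- and any part of your cyclic-index discussion that leans on the orbit of a \emph{simple} root rather than on the coefficient argument --- does not close.
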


The second eigenvalue of hypergraphs was introduced by Friedman and Wigderson~\cite{Friedman, FriedmanC95}. Lenz and Mubayi~\cite{LM} showed that a hypergraph satisfies some quasirandom properties if and only if it has a small second eigenvalue. In 2019, Li and Mohar~\cite{LiMohar} generalized the Alon--Boppana Theorem to $k$-graphs, and showed that for every finite $d$-regular  $k$-graph $\h$ on $n$ vertices, the second  eigenvalue of $\h$ is at least
$$\frac{k}{k-1}\big((k-1)(d-1)\big)^{1\over k}-o_n(1),$$
where $o_n(1)$ is a quantity that tends to zero for every fixed $d$ as $n\rightarrow \infty$.
Similar to the important application of Theorem~\ref{Heilmanntheorem} in Ramanujan graphs, Theorem~\ref{maintheorem22} is expected to be useful for extending Ramanujan graphs to hypergraphs.

As an application of Theorem~\ref{maintheorem22}, we obtain a new and short proof of the following result due to Friedman~\cite{Friedman} (see also~\cite{LiMohar} by Li and Mohar).

\begin{theorem}[\cite{Friedman,LiMohar}] \label{Lupperbound}
Let $\T$ be a $k$-tree with maximum degree $\mathnormal{\Delta}\geq  2$,
and let $\rho(\T)$ be the spectral radius of the adjacency tensor of $\T$.
Then
$$\rho(\T)<\frac{k}{k-1}\big((k-1)(\Delta-1)\big)^{1\over k}.$$
\end{theorem}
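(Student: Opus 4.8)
The plan is to derive Theorem~\ref{Lupperbound} as an immediate corollary of Theorems~\ref{Mthm} and~\ref{maintheorem22}; no new computation is needed, only a short deduction.

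First I would apply Theorem~\ref{Mthm} to the $k$-tree $\T$: it tells us that $\lambda(\T)$, the maximum modulus of all zeros of the matching polynomial $\mu(\T,x)$, is itself a zero of $\mu(\T,x)$ and equals the spectral radius $\rho(\T)$ of the adjacency tensor of $\T$. This reduces the desired estimate on $\rho(\T)$ to the equivalent estimate $\lambda(\T)<\frac{k}{k-1}\big((k-1)(\Delta-1)\big)^{1/k}$.

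Next I would note that a connected $k$-tree is in particular a connected $k$-graph, so under the standing hypothesis $\Delta\geq2$ the $k$-graph $\h=\T$ satisfies all the assumptions of Theorem~\ref{maintheorem22}. The upper bound in \eqref{mr} then gives precisely $\lambda(\T)<\frac{k}{k-1}\big((k-1)(\Delta-1)\big)^{1/k}$, and combining this with the identity $\rho(\T)=\lambda(\T)$ from the previous step yields the claim.

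Since the statement is a corollary, there is no genuine obstacle: all of the substance is already contained in Theorems~\ref{Mthm} and~\ref{maintheorem22}. The only points that require any attention are purely bookkeeping---checking that connectedness is part of the definition of a $k$-tree, and that $\Delta\geq2$ keeps the right-hand side of \eqref{mr} positive so that the bound is non-vacuous---and neither causes any difficulty. If one wished to avoid invoking the full strength of Theorem~\ref{maintheorem22}, one could instead attempt to prove the bound directly via the path-tree identity together with a spectral estimate on the resulting $k$-tree, but passing through the two theorems above is by far the cleanest route.
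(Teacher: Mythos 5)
Your proof is correct and matches the paper's own treatment exactly: the paper states that Theorem~\ref{Lupperbound} follows immediately by combining Theorem~\ref{Mthm} (which gives $\rho(\T)=\lambda(\T)$ for a $k$-tree) with the upper bound in Theorem~\ref{maintheorem22}. Nothing further is needed.
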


The rest of this paper is organized as follows.
In the next section we introduce some preliminary notation and results that will be used later.
In Section~\ref{Pathtrees}, we show that $\mu(\h, x)$
divides the matching polynomial of the $k$-walk-tree of $\h$.
In Section~\ref{distributionzeroskg}, we investigate  the distribution of the zeros of the matching polynomial
and complete the proofs of Theorems~\ref{maintheorem22} and~\ref{Lupperbound}.
Finally, we conclude this paper with some further discussion and  questions in Section~\ref{ConcludingRemarks}.

\section{Preliminaries}\label{Preliminaries}


\subsection{Notation}


Two $k$-graphs $\G$ and $\h$ are called {\it isomorphic}, denoted by $\G\backsimeq \h$,
if there exists a bijection $\theta:V(\G)\mapsto  V(\h)$ such that $ \{v_1, \ldots, v_k\}\in E(\G)$ if and only if $ \{\theta(v_1),  \ldots, \theta(v_k)\}\in E(\h)$.
We say that $\G$ is a {\it subgraph} of $\h$ if $V(\G)\subseteq V(\h)$ and $E(\G)\subseteq E(\h)$, and $\G$ is a {\it proper subgraph} of $\h$ if $\G$ is a subgraph of $\h$ and $\G\neq \h$.

An alternating sequence $p=(v_0,e_1,v_1, \ldots, e_\ell, v_\ell)$ of vertices and edges in $\h$ is called a {\it path} in $\h$ if the vertices and edges are distinct and ${v_{i-1}, v_i}\in e_i$ for $i =1, \ldots, \ell$.
The sequence $p$ is called a {\it cycle} in $\h$ if $p$ is a path in $\h$ with the additional condition $v_0= v_\ell$.
A $k$-graph is called a {\it $k$-forest} if it is acyclic,
and we say that a $k$-forest $\F$ is a {\it subforest} of $\h$ if $\F$ is also a subgraph of $\h$.
A $k$-graph $\h$ is {\it connected} if each pair of vertices of $\h$ are connected by a path,
and is a {\it $k$-uniform hypertree} (or simply {\it $k$-tree}) if $\h$ is both connected and acyclic.

Let $v$ be a vertex of $\h$.
Denote by $N_\h(v)$ the set of all vertices of $\h$ adjacent to $v$ and by $E_\h(v)$ the set of all edges of $\h$ incident to $v$.
The degree of $v$ is defined as $|E_\h(v)|$ and is denoted by $d_\h(v)$.
The maximum degree and minimum degree of the vertices of $\h$ is  denoted by $\Delta(\h)$ and $\delta(\h)$, respectively.
For a subset $W$ of $V(\h)$, let  $\h[W]$  denote the subgraph of $\h$ induced by $W$, i.e., $V(\h[W])=W$ and $E(\h[W])=\{e\in E(\h):e \subseteq W\}$.
For convenience, we simply write $\h-W$ instead of  $\h[V(\h)\setminus W]$,   write $\h-v$ for $\h-\{v\}$, and use $\h-e$  to denote  $\h-\{v_1, \ldots, v_k\}$ where $e=\{v_1, \ldots, v_k\}$ is an edge.

The following lemma provides some fundamental properties of the
matching polynomial.

\begin{lemma}[\cite{Suejc}]\label{basiclemma}
Let $\G$ and $\h$ be two vertex-disjoint $k$-graphs.
The following assertions hold.
\begin{enumerate}

\item $\mu(\G\oplus \h, x)=\mu(\G, x)\mu(\h, x)$, where $\G\oplus \h$ denotes the disjoint union of $\G$ and $\h$.

\item  For every vertex  $u\in V(\h)$,
$\mu(\h, x)=x\mu(\h-u, x)-\sum_{e\in E_\h(u)}\mu(\h-e, x).$

\item $\frac{d}{dx}\mu(\h, x)= \sum_{v\in V(\h)}\mu(\h-v, x).$

\end{enumerate}
\end{lemma}

\subsection{The characteristic polynomials of $k$-trees}

A real {\it tensor} (also called \emph{hypermatrix}) $\A=(a_{i_{1}  \ldots i_{k}})$ of order $k$ and dimension $n$ refers to a
  multi-dimensional array with entries $a_{i_{1}\ldots i_{k}}\in \mathbb{R}$
  for all $i_{j}\in [n]:=\{1,\ldots,n\}$ and $j\in [k]$.
Clearly, if $k=2$, then $\A$ is a square matrix of dimension $n$.
Let $\mathcal{I}=(\iota_{i_1\ldots i_k})$ be the {\it identity tensor} of order $k$ and dimension $n$, that is, $\iota_{i_{1} \ldots i_{k}}=1$ if
 $i_{1}=\cdots=i_{k} \in [n]$ and $ \iota_{i_{1} \ldots i_{k}}=0$ otherwise.

Let $\A=(a_{i_{1} \ldots i_{k}})$ be a tensor of order $k$ and dimension $n$.
For a vector $\x=(\x_1,\ldots,\x_n)^\top\in \mathbb{C}^{n}$, denote by $\x^{[k]}=(\x_1^{k},\ldots,\x_n^{k})^\top$ and let $\A \x^{k-1}$ be a vector in $\mathbb{C}^n$ whose $i$th component is 
\begin{align*}
(\A \x^{k-1})_i & =\sum_{i_{2},\ldots,i_{k}\in [n]}a_{ii_{2}\ldots i_{k}}\x_{i_{2}}\cdots \x_{i_k}.
\end{align*}
In 2005, Lim \cite{Lim} and Qi \cite{Qi} independently introduced the eigenvalues of tensors.
For some $\lambda \in \mathbb{C}$, if the polynomial system 
$$\A \x^{k-1}=\lambda \x^{[k-1]},$$ has a solution $\x \in \mathbb{C}^{n}\backslash \{0\}$,
then $\lambda $ is called an \emph{eigenvalue} of $\A$ and $\x$ is an \emph{eigenvector} of $\A$ associated with $\lambda$.

The \emph{determinant} of $\A$, denoted by $\det \A$, is defined as the resultant of the polynomial system $\A  \x^{k-1}$ \cite{Ha}
and the \emph{characteristic polynomial} $\phi_\A(x)$ of $\A$ is defined as $\det(x \I-\A)$~\cite{Qi}.
It is proved in~\cite[Theorem 1(a)]{Qi} that $\la$ is an eigenvalue of $\A$ if and only if it is a root of $\phi_\A(x)$.

 Let $\h$ be a $k$-graph on $n$ vertices $v_1,\ldots,v_n$.
The {\it adjacency tensor} \cite{CD} of $\h$ is defined as $\mathcal{A}(\h)=(a_{i_{1}\ldots i_{k}})$,
a tensor of order $k$ and dimension $n$,
where
\[a_{i_{1}i_{2}\ldots i_{k}}=\left\{
 \begin{array}{ll}
\frac{1}{(k-1)!}, &  \mbox{if~} \{v_{i_{1}},\ldots,v_{i_{k}}\} \in E(\h);\\
  0, & \mbox{otherwise}.
  \end{array}\right.
\]
In this paper, the eigenvalues of a $k$-graph $\h$ always refer to those of its adjacency tensor.
The {\it spectral radius of $\h$} is defined as
\[
\rho(\h)=\max\{|\lambda|: \lambda \mbox{ is an eigenvalue of } \A(\h) \},
\]
which is exactly the spectral radius of $\A(\h)$.


\begin{lemma}[\cite{KF}]\label{PerronFrobeniuslemma}
Let $\h$ be a connected $k$-graph.
If $\mathcal{G}$ is a subgraph $\mathcal{G}$ of $\h$, then
$\rho(\mathcal{G})\leq \rho(\h)$,
where the equality holds if and only if $\mathcal{G}=\h $.
\end{lemma}

Mowshowitz \cite{Mowshowitz} and independently Lov\'asz and Pelik\'an \cite{LovaszPe} proved that the characteristic polynomial of a tree coincides with its matching polynomial.
Inspired by this classical result,
Zhang, Kang, Shan, and Bai \cite{Zhang} obtained the eigenvalues with certain restrictions of a $k$-tree by its matching polynomial.
Subsequently, Clark and Cooper \cite{ClarkCooper} characterized all eigenvalues of a $k$-tree by the matching polynomials of its subhypertrees.
Recently, Li, Su, and Fallat \cite{Li} determined
the characteristic polynomial of the adjacency tensor of a $k$-tree
by the matching polynomials of its sub-hypertrees.
Here we only list two required results and refer their papers \cite{ClarkCooper,Li,Zhang} for the complete story.

\begin{theorem}
\label{dividesrelationthm}
Let $\T$ be a $k$-tree with adjacency tensor $\A(\T)$.
Then the following assertions holds.
\begin{enumerate}

\item~{\em (Corollary 3.2 \cite{Zhang}).}
     The largest real root of $\mu(\T,x)$ is equal to the spectral radius of $\A(\T)$.

\item~{\em (\cite{LovaszPe,Mowshowitz}, Corollary 5.6 \cite{Li}).} $\mu(\T,x)$ divides the characteristic polynomial of $\A(\T)$.

\end{enumerate}
\end{theorem}

\section{The $k$-walk-tree}\label{Pathtrees}

 The well-known path tree (also called Godsil's tree) of a graph, introduced by Godsil~\cite{Godsil2}, is considered as one of the most important and useful tools in matching polynomial theory.
For a graph $G$ and a vertex $u\in V(G)$,
the {\it path tree $T(G, u)$} is a tree which has vertices as the paths in $G$ starting at $u$, where two such paths
are adjacent if one is a maximal proper subpath of the other.
Godsil~\cite[Theorem 2.5]{Godsil2} established the following important theorem which has many applications in combinatorics~\cite{Godsil3,Godsil4,KahnKim}.

\begin{theorem}[Godsil \cite{Godsil2}]\label{Godsiltreethm}
Let $G$ be a connected graph with a vertex $u\in V(G)$.
Then we have
\begin{equation}\label{Godsiltreethmeq}
\frac{\mu(G-u,x)}{\mu(G, x)}
= \frac{\mu(T(G,u)-u,x)}{\mu(T(G,u),x)},
\end{equation}
and $\mu(G, x)$ divides $\mu(T(G,u),x)$.
\end{theorem}

To refute a conjecture by Kahn and Kim~\cite{KahnKim} regarding the random matchings of $k$-graphs,
Lee~\cite{Lee} introduced the concept of $k$-walk-tree, a hypergraph analog of the path tree.
The aim of this section is to prove that
$\mu(\h, x)$
divides the matching polynomial of the $k$-walk-tree of $\h$,
which will be utilized to prove Theorem~\ref{maintheorem22}.
We begin with the definition of the $k$-walk-tree by a recursive construction described in~\cite[Observation 3.4]{Lee}, which is equivalent to the original definition in~\cite[Definition 3.3]{Lee}.

\begin{definition}\label{defwalktreeequivalent}
Let $\h$ be a $k$-graph with a vertex $u\in V(\h)$ and a linear ordering $\prec$ on $V(\h).$
Suppose that $e_1,\ldots, e_t$ are all edges containing $u$ in $\h$ and the vertices in
$e_i = \{u, u_{(e_i, 1)}, \dots, u_{(e_i, k-1)}\}$ satisfy $u_{(e_i, 1)} \prec \cdots \prec u_{(e_i, k-1)}$
for every $i\in [t]$.
The {\it $k$-walk-tree} $\T(\h,\prec,u)$ of $\h$ rooted at $u\in V(\h)$ with respect to $\prec$ is defined to be the $k$-tree obtained from the collection of disjoint union of $k$-trees
      $$\Big\{\cup_{j = 1}^{k-1} \T\big(\h - \{u, u_{(e_i, 1)}, \dots, u_{(e_i, j-1)}\}, \prec,u_{(e_i, j)}\big): i\in [t]\Big\}$$
by adding an edge $\{u, (u_{(e_i, 1)}), \dots, (u_{(e_i, k-1)})\}$ for each $i\in [t]$,
where $(u_{(e_i, j)})$ denotes the root of
$\T\left(\h-\{v, u_{(e_i, 1)}, \dots, u_{(e_i, j-1)}\}, \prec, u_{(e_i, j)}\right)$
for each $i\in [t]$ and $j\in [k-1].$
\end{definition}

\begin{remark}
%
%
For a given labeled $k$-graph,
we observe that its
$k$-walk-tree depends not only on the choice of the root vertex but also on the linear ordering imposed on its vertex set.
Figure~\ref{figure2} illustrates two $k$-walk-trees of the $k$-graph $\mathcal{X}$ described in Figure~\ref{figure1}, both having the same root $u$. The difference between these trees arises from the choice of different linear orderings in $V(\mathcal{X})$.
%
\end{remark}

\begin{figure}[htb]
\centering
\begin{tikzpicture}

\node at (0,0){};
    \coordinate (u) at (3,1.2);

    \coordinate (a) at (1.2,0);
    \coordinate (b) at (4.8,0);

    \coordinate (z) at (3,-1.2);

    \coordinate (x) at (0 ,1.2);
    \coordinate (y) at (0,-1.2);

    \filldraw[fill=blue!30] (a) -- (x) -- (y)-- cycle ;
    \draw[fill=green!30] (u) -- (a) -- (b) -- cycle;
    \draw[fill=yellow!30] (a) -- (b) -- (z) -- cycle;

     \fill[black!80, draw=black, thick] (u) circle (2pt) node[black, above ] {$u$};
    \fill[black!80, draw=black, thick] (x) circle (2pt) node[black, above] {$x$};

    \fill[black!80, draw=black, thick] (a) circle (2pt) node[black, below ] {$a$};
    \fill[black!80, draw=black, thick] (b) circle (2pt) node[black, right] {$b$};

    \fill[black!80, draw=black, thick] (y) circle (2pt) node[black, below] {$y$};
    \fill[black!80, draw=black, thick] (z) circle (2pt) node[black, below] {$z$};

\node at (3.3,0.5)[left] {$e_1$};
\node at (3.3,-0.5)[left] {$e_2$};
\node at (0.8, 0 )[left] {$e_3$};


\node at (9.5,0 ){\begin{tabular}{|c|c|c|c|c|}

\hline

\hline

$V(\mathcal{X})=\{u,a,b,x,y,z\}$ \\

\hline

$E(\mathcal{X})=\{e_1,e_2,e_3\}$  \\

\hline
$e_1=\{u,a,b\}$  \\

\hline
$e_2=\{z,a,b\}$  \\

\hline
$e_3=\{a,x,y\}$  \\

\hline

Order $\prec_a: u \prec_a a \prec_a b \prec_a x \prec_a y \prec_a z$ \\

\hline

Order $\prec_b: u \prec_b b \prec_b a \prec_b x \prec_b y \prec_b z$ \\

\hline
\end{tabular}

};
\end{tikzpicture}
\caption{The $k$-graph $\mathcal{X}$.}\label{figure1}
\end{figure}

\begin{figure}[htb]
\centering
\begin{tikzpicture}

\node at (0,0){};
    \coordinate (u) at (2,1.2);

    \coordinate (a) at (0.2,0);
    \coordinate (b) at (3.8,0);

    \coordinate (x) at (-1.8,-1.8);
    \coordinate (y) at (-0.5,-1.8);

     \coordinate (s) at (0.9,-1.8);
    \coordinate (t) at (2.2,-1.8);

    \filldraw[fill=yellow!30] (a) -- (x) -- (y)-- cycle ;
    \draw[fill=green!30] (u) -- (a) -- (b) -- cycle;
    \draw[fill=blue!30] (a) -- (s) -- (t) -- cycle;

     \fill[black!80, draw=black, thick] (u) circle (2pt) node[black, above ] {$u$};

     \fill[black!80, draw=black, thick] (a) circle (2pt) node[black, left ] {$(u_{(e_1,a)})$};
    \fill[black!80, draw=black, thick] (b) circle (2pt) node[black, right] {$(u_{(e_1,b)})$};

    \fill[black!80, draw=black, thick] (x) circle (2pt) node[black, below ] {$(a_{(e_2,b)})$};
    \fill[black!80, draw=black, thick] (y) circle (2pt) node[black, below] {$(a_{(e_2,z)})$};

     \fill[black!80, draw=black, thick] (s) circle (2pt) node[black, below] {$(a_{(e_3,x)})$};
    \fill[black!80, draw=black, thick] (t) circle (2pt) node[black, below] {$(a_{(e_3,y)})$};


    \coordinate (ub) at (9,1.2);

    \coordinate (ab) at (7.2,0);
    \coordinate (bb) at (10.8,0);

    \coordinate (xb) at (6.4,-1.8);
    \coordinate (yb) at (8,-1.8);

     \coordinate (sb) at (10,-1.8);
    \coordinate (tb) at (11.6,-1.8);

   \coordinate (lb) at (9.2,-3.6);
    \coordinate (rb) at (10.8,-3.6);

    \draw[fill=green!30] (ub) -- (ab) -- (bb) -- cycle;

    \filldraw[fill=blue!30] (ab) -- (xb) -- (yb)-- cycle ;

    \filldraw[fill=yellow!30] (bb) -- (sb) -- (tb)-- cycle ;

     \filldraw[fill=blue!30] (sb) -- (lb) -- (rb)-- cycle ;


     \fill[black!80, draw=black, thick] (ub) circle (2pt) node[black, above ] {$u$};

     \fill[black!80, draw=black, thick] (ab) circle (2pt) node[black, left ] {$(u_{(e_1,a)})$};
    \fill[black!80, draw=black, thick] (bb) circle (2pt) node[black, right] {$(u_{(e_1,b)})$};

    \fill[black!80, draw=black, thick] (xb) circle (2pt) node[black, below] {$(a_{(e_3,x)})$};
      \fill[black!80, draw=black, thick] (yb) circle (2pt) node[black, below] {$(a_{(e_3,y)})$};

    \fill[black!80, draw=black, thick] (sb) circle (2pt) node[black, left] {$(b_{(e_2,a)})$};
      \fill[black!80, draw=black, thick] (tb) circle (2pt) node[black, right] {$(b_{(e_2,z)})$};

   \fill[black!80, draw=black, thick] (lb) circle (2pt) node[black, left] {$(a_{(e_3,x)})$};
      \fill[black!80, draw=black, thick] (rb) circle (2pt) node[black, right] {$(a_{(e_3,y)})$};

 \node at (2,-4) [below] {The $k$-walk-tree $\T(\h,\prec_a,u)$};

 \node at (9,-4) [below] {The $k$-walk-tree $\T(\h,\prec_b,u)$};


\end{tikzpicture}
\caption{Two $k$-walk-trees of the $k$-graph $\mathcal{X}$ rooted at $u$.}\label{figure2}
\end{figure}

\begin{theorem}[Theorem~3.5~\cite{KahnKim}]\label{treethm1}
Let $\h$ be a $k$-graph with a vertex $u\in V(\h)$ and a linear ordering $\prec$ on $V(\h).$
Then we have
$$
\frac{\mu(\h-u,x)}{\mu(\h, x)}
= \frac{\mu(\T(\h,\prec,u)-u,x) }{\mu(\T(\h,\prec,u),x)}.
$$
\end{theorem}

We are now ready to prove the main result of this section, which plays an important role in the proof of Theorem~\ref{maintheorem22}.

\begin{theorem}\label{d2}
Let $\h$ be a connected $k$-graph with a vertex $u\in V(\h)$ and a linear ordering $\prec$ on $V(\h)$.
Then for every vertex $u\in V(\h)$,
there exists a proper subforest $\F$ of $\T(\h,\prec,u)$ such that $$\mu(\h,x)=\frac{ \mu(\T(\h,\prec,u),x)}{\mu(\F,x)},$$
and hence that $\mu(\h,x)$ divides $\mu(\T(\h,\prec,u),x)$.
\end{theorem}
\begin{proof}
We prove the statement by induction on $|V(\h)|$.
If $|V(\h)|=k$, then $\h$ is a $k$-tree with one edge,
and the statement is trivial in this case.
Assume $|V(\h)|>k$.
By Theorem~\ref{treethm1}, we have
\begin{equation}\label{eqr1}
{\mu(\T(\h,\prec,u),x)}
= {\mu(\h, x)}\frac{\mu(\T(\h,\prec,u)-u,x) }   {\mu(\h-u,x)}.
\end{equation}
Thus, to prove the statement, it suffices to prove that there exists a subforest $\F$ of $\T(\h,\prec,u)$ such that the second factor on the right-hand side of~\eqref{eqr1} is the matching polynomial of $\F$.

Assume that $e_1,\ldots, e_t$ are all edges containing $u$ in $\h$ and the vertices in $e_i = \{u, u_{(e_i, 1)}, \dots, u_{(e_i, k-1)}\}$ satisfy $u_{(e_i, 1)} \prec \cdots \prec u_{(e_i, k-1)}$ for every $i\in [t]$.
Combining the definition of the $k$-walk-tree and Lemma~\ref{basiclemma}(1), we obtain
\begin{equation}\label{divideseq11}
{\mu(\T(\h,\prec,u)-u,x)}
=\prod_{i\in [t],\,j\in [k-1] }
 \mu\Big(  \T\big(\h - \{u, u_{(e_i, 1)}, \dots, u_{(e_i, j-1)}\}, \prec,u_{(e_i, j)}\big) \Big).
\end{equation}
Let $\h_1,\ldots,\h_s$ be the  components of $\h-u$.
By Lemma~\ref{basiclemma}(1), we get
\begin{equation}\label{divideseq22}
\mu(\h-u,x )=\prod_{i=1}^s \mu(\h_i,x).
\end{equation}
Denote by $N_\h(u)$ the set of all vertices adjacent to $u$ in $\h$.
For each $i=1, \ldots, s$,
let $u_i\in N_\h(u)\cap V(\h_i)$ be the unique vertex such that
$u_i \prec w$ for every $w\in N_\h(u) \cap V(\h_i)$.
Observe that there exists $a_i\in [t]$ such that $u_i\in e_{a_i}$.
Here, the edges $e_{a_i}$, $i=1, \ldots, s$, may be repeatedly selected.
Note that $e_{a_i}=\{u, u_{(e_{a_i}, 1)}, \dots, u_{(e_{a_i}, k-1)}\}$,
so there exists $b_i\in [k-1]$ such that $u_i=u_{(e_{a_i}, b_i)}$.
As $\h_1,\ldots,\h_s$ are different components of $\h-u$,
 the vertices $u_{(e_{a_i}, b_i)}$, $i=1, \ldots, s$, are all distinct.
By the choice of $u_i$ and $u_{(e_{a_i}, b_i)}$,
one may check that
$$\T\big(\h - \{u, u_{(e_{a_i}, 1)}, \dots, u_{(e_{a_i}, b_i-1)}\}, \prec,u_{(e_{a_i}, b_i)} \big)=:\T_i$$
is the $k$-walk-tree of $\h_i$ rooted at $u_{(e_{a_i}, b_i)}$. 
By the induction hypothesis,
for each $i=1, \ldots, s$,
there exists a proper subforest $\F_i$ of $\T_i$ such that
\begin{equation}\label{divideseq33}
\mu(\F_i,x) = \frac{\mu(\T_i, x)}{\mu(\h_i,x)}.
\end{equation}
Combining \eqref{divideseq11}, \eqref{divideseq22}, and \eqref{divideseq33}, we deduce that
\begin{align} \nonumber
\frac{\mu(\T(\h,\prec,u)-u,x) }   {\mu(\h-u,x)}
= & \frac{\prod_{i\in [t],\, j\in [k-1] }
 \mu\Big(  \T\big(\h - \{u, u_{(e_ i, 1)}, \dots, u_{(e_i, j-1)}\}, \prec,u_{(e_i, j)}\big),x \Big)
 }   {\prod_{i=1}^s \mu(\h_i,x)}\\ \nonumber
=& \frac{\mu(\G,x) \Big( \prod_{i=1}^s  \mu(\T_i,x) \Big) }   {\prod_{i=1}^s \mu(\h_i,x)} \\ \nonumber
=& \frac{\mu(\G,x) \Big( \prod_{i=1}^s  \mu(\h_i,x) \mu(\F_i,x)\Big) }   {\prod_{i=1}^s \mu(\h_i,x)}
\\ \label{divideseq44}
=& \mu(\G,x) \prod_{i=1}^s \mu(\F_i,x),
\end{align}
where
$$\G=  \bigoplus_{i\in [t],\, j\in [k-1],
\atop  (i,j)\neq (a_r, b_r) {\rm\,for\,every\,} r\in[s]}
\T\big(\h - \{u, u_{(e_i, 1)}, \dots, u_{(e_i, j-1)}\}, \prec,u_{(e_i, j)}\big).$$
Recall that $\F_i$ is a proper subforest of $\T_i$ for each $i=1, \ldots, s$,
so one may check that $\G \oplus \left( \bigoplus_{i=1}^s  \F_i \right)$ is a proper subforest of $\T(\h,\prec,u)$, which is the required subforest $\F$.
The statement follows from this fact, \eqref{eqr1} and \eqref{divideseq44}.
\end{proof}

\section{The distribution of the zeros of the matching polynomial}\label{distributionzeroskg}

This section is devoted to studying the distribution of the zeros of the matching polynomial.
In particular, we complete the proofs of Theorems~\ref{maintheorem22} and~\ref{Lupperbound}.

\subsection{The cyclic index of the matching polynomial}
In this subsection, we prove that the maximum modulus $\lambda(\h)$ of all the zeros of $\mu(\h, x)$ is a simple root of $\mu(\h, x)$ and the cyclic index of $\mu(\h, x)$ is exactly equal to $k$.
We begin with the following lemma,
which implies that the largest real root $\widehat{\lambda}(\F)$ of a $k$-forest $\F$ is equal to ${\lambda}(\F)$.




\begin{lemma}\label{ktreethreeequal}
For a $k$-forest $\F$, we have
$$
\lambda(\F)=\widehat{\lambda}(\F)=\rho(\F).
$$
\end{lemma}
\begin{proof}
By Lemma \ref{basiclemma}(1),
it suffices to prove that the statement holds for all $k$-trees.
Let  $\T$ be a $k$-tree.
Observe that $\rho(\T)=\widehat{\lambda}(\T)\leq \lambda(\T)$ by Theorem~\ref{dividesrelationthm}(1).
On the other hand, if $\lambda$ is a zero of $\mu(\T, x)$ such that
$|\lambda|=\lambda (\T)$, then $\lambda$ is an eigenvalue of $\A(\T)$  by Theorem~\ref{dividesrelationthm}(2).
By the definition of spectral radius, we get $|\lambda|=\lambda (\T)\leq \rho(\T)$. The result follows.
\end{proof}

\begin{theorem}\label{maintheorem11}
Let $\h$ be a connected $k$-graph with a linear ordering $\prec$ on $V(\h).$
Then for every $u \in V(\h)$,
$\mu(\h,x)$ divides the characteristic polynomial of
the adjacency tensor of the $k$-walk-tree $\T(\h,\prec,u)$.
Moreover, $\lambda(\h)$ is a simple root of $\mu(\h,x)$ and \begin{equation}\label{threeequality}
\lambda(\h)
=\lambda(\T(\h,\prec,u))=\rho(\T(\h,\prec,u)).
\end{equation}
\end{theorem}

\begin{proof}
 The first statement immediately follows from Theorem~\ref{dividesrelationthm}(2) and Theorem \ref{d2}.
We next prove that $\lambda(\h)$ is a root of $\mu(\h,x)$ and
\eqref{threeequality} holds.
By Theorem \ref{d2}, there exists a proper subforest $\F$ of $\T(\h,\prec,u)$ such that
\begin{align}\label{eqsubsection331}
\mu(\T(\h,\prec,u),x) = \mu(\h,x)\mu(\F,x),
\end{align}
which implies that
\begin{align}\label{eqsubsection3322}
\lambda(\T(\h,\prec,u))=\max\big\{\lambda(\h), \lambda(\F)\big\}.
\end{align}
Since $\F$ is a proper subforest of $\T(\h,\prec,u)$, we have $\rho(\F)<\rho(\T(\h,\prec,u))$ by Lemma \ref{PerronFrobeniuslemma}.
By Lemma~\ref{ktreethreeequal}, we have
\begin{align}\label{eqsubsecion33233}
\lambda(\F)=\widehat{\lambda}(\F)=\rho(\F)
<\rho(\T(\h,\prec,u))=
\lambda(\T(\h,\prec,u))=\widehat{\lambda}(\T(\h,\prec,u)).
\end{align}
Combining \eqref{eqsubsection331}, \eqref{eqsubsection3322} and
\eqref{eqsubsecion33233}, we
derive that $\lambda(\h)$ equals to $\lambda(\T(\h,\prec,u))$ and is a root of $\mu(\h,x)$, and \eqref{threeequality} follows, as desired.

It remains to prove that the root $\lambda(\h)$ is simple.
 We first claim that $\lambda(\h)>\lambda(\h-v)$ for each $v\in V(\h)$.
Let $\h_1,\ldots,\h_s$ be the components of $\h-v$.
Without loss of generality, we may assume
$\lambda(\h_1) =\lambda(\h-v).$
Using~\eqref{threeequality}, we have $\lambda(\h)=\rho(\T(\h,\prec,v))$ and $\lambda(\h-v)=\lambda(\h_1)=\rho(\T(\h_1,\prec,v_1))$,
where $v_1\in N_\h(v)\cap V(\h_1)$ is the unique vertex such that
$v_1 \prec w$ for every $w\in N_\h(v)\cap V(\h_1)$.
Observe that $\T(\h_1,\prec,v_1)$ is a proper subtree of $\T(\h,v)$.
By Lemma~\ref{PerronFrobeniuslemma},
$\rho(\T(\h,\prec,v))>\rho(\T(\h_1,\prec,v_1))$,
which implies $\lambda(\h)>\lambda(\h-v)$, as desired.
Note that the leading coefficient of $\sum_{v\in V(\h)}\mu(\h-v, x)$ is positive. It follows from above claim that
$\sum_{v\in V(\h)}\mu(\h-v, x)$ is positive whenever $x\geq \lambda(\h)$.
Therefore, $\lambda(\h)$ is not a root of
$\frac{d}{dx}\mu(\h, x)$
by Lemma \ref{basiclemma}(3), so the root $\lambda(\h)$ of $\mu(\h,x)$ is simple.
The proof is completed.
\end{proof}




As an application of Theorem~\ref{maintheorem11},
we next determine the cyclic index of the matching polynomial of $k$-graphs.

\begin{theorem}\label{zerosymmetricmp}
Let $\h$ be a connected $k$-graph.
Then the cyclic index of $\mu(\h,  x)$ is equal to~$k$.
\end{theorem}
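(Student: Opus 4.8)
The plan is to show that the cyclic index of $\mu(\h,x)$ is exactly $k$ by establishing the two inequalities separately: first that $\mu(\h,x)$ is $k$-symmetric (so the cyclic index is at least $k$), and then that it is not $\ell$-symmetric for any $\ell>k$ (so the cyclic index is at most $k$). The $k$-symmetry is the easy direction: from the definition \eqref{definitionmp}, every monomial appearing in $\mu(\h,x)$ has exponent $|V(\h)|-kr$, and all such exponents are congruent modulo $k$. Writing $|V(\h)|=qk+t$ with $0\le t<k$, we get $\mu(\h,x)=x^t g(x^k)$ for the polynomial $g(y)=\sum_{r\ge 0}(-1)^r p(\h,r)y^{q-r}$, which is \eqref{zerosymmetricdefi} with $\ell=k$. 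Hence the cyclic index is at least $k$.

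For the upper bound, suppose for contradiction that $\mu(\h,x)$ is $\ell$-symmetric with $\ell>k$, so its zero set (with multiplicities) is invariant under multiplication by $\zeta=e^{2\pi i/\ell}$. This is where Theorem \ref{maintheorem11} enters: $\lambda(\h)$, the maximum modulus, is a \emph{simple} root of $\mu(\h,x)$, and $\lambda(\h)>0$ since $\Delta\ge 2$ forces $\h$ to have an edge (indeed $\lambda(\h)\ge\Delta^{1/k}>1$, or more elementarily $\lambda(\h)>0$ because $\mu(\h,x)$ is not a monomial). Now $\ell$-symmetry would force $\zeta\lambda(\h)$ to also be a root of the same multiplicity as $\lambda(\h)$, namely simple; but $|\zeta\lambda(\h)|=\lambda(\h)$, so $\zeta\lambda(\h)$ is another root of maximum modulus. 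Since $\mu(\h,x)$ is a real polynomial, its nonreal roots come in conjugate pairs, and $\overline{\zeta\lambda(\h)}=\zeta^{-1}\lambda(\h)\ne\zeta\lambda(\h)$ when $\zeta\notin\{\pm 1\}$; together with $\lambda(\h)$ itself these give at least two roots of modulus $\lambda(\h)$ besides $\lambda(\h)$, and one must rule out the degenerate possibility $\ell=2$ — but $\ell>k\ge 2$ already excludes that. The real content is: $\ell$-symmetry with $\ell>k$ produces a root $\zeta^j\lambda(\h)$ whose argument is a nonzero multiple of $2\pi/\ell$ strictly between $0$ and $2\pi$, contradicting that $\lambda(\h)$ is the \emph{unique} root of maximum modulus. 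So I must first argue uniqueness of the maximum-modulus root.

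The uniqueness argument is the crux. I would deduce it from the already-proven facts: by Theorem \ref{maintheorem11}, $\lambda(\h)=\rho(\T(\h,u))$ for any $u\in V(\h)$, and every zero of $\mu(\h,x)$ is an eigenvalue of the adjacency tensor $\A(\T(\h,u))$ of the connected $k$-tree $\T(\h,u)$. Since $\T(\h,u)$ is connected, its adjacency tensor is weakly irreducible nonnegative, so by the Perron--Frobenius theory (Lemma \ref{Perron-Frobeniuslemma} and the structure of weakly irreducible nonnegative tensors), the eigenvalues of $\A(\T(\h,u))$ of maximum modulus are exactly $\{\rho\,\omega : \omega^h=1\}$ for $h$ the cyclic index of the tensor, and for a connected $k$-graph that spectral index equals $k$ (this is a known fact about adjacency tensors of connected $k$-graphs — the eigenvalue set is invariant under multiplication by $k$-th roots of unity and $k$ is maximal with this property). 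Thus among the roots of $\mu(\h,x)$, which form a subset of these eigenvalues closed under complex conjugation, the only one equal to $\lambda(\h)$ times a root of unity of order dividing $\ell$ would need $\ell\mid k$ in order to avoid introducing a second maximum-modulus root — but actually we only need: if $\ell>k$ then $\zeta\lambda(\h)$ with $\zeta$ a primitive $\ell$-th root of unity is \emph{not} of the form $\rho\omega$ with $\omega^k=1$, hence is not an eigenvalue of $\A(\T(\h,u))$, hence not a root of $\mu(\h,x)$, contradicting $\ell$-symmetry.

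The main obstacle I anticipate is pinning down precisely \emph{which} maximum-modulus eigenvalues $\A(\T(\h,u))$ has, and in particular justifying that the spectral cyclic index of the adjacency tensor of the connected $k$-tree $\T(\h,u)$ is exactly $k$ (it is at least $k$ by the structure of the defining equations $\A\x^{k-1}=\lambda\x^{[k-1]}$, which are quasi-homogeneous of a degree forcing $k$-fold rotational symmetry; it is at most $k$ because a connected $k$-graph with at least one edge has, by Perron--Frobenius for weakly irreducible nonnegative tensors, only finitely many eigenvalues on the circle $|\lambda|=\rho$ and they form exactly the orbit of $\rho$ under the group of $k$-th roots of unity). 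If invoking this directly is deemed too heavy, an alternative route is to work entirely with $\mu$: the $k$-symmetry gives $\mu(\h,x)=x^t g(x^k)$, and an $\ell$-symmetry with $\ell>k$ — after writing $\ell=k m$ or reducing to $\gcd(\ell,k)$ — would force $g$ itself to be $(\ell/\gcd)$-symmetric, which one contradicts by noting $g$ has a simple root at $\lambda(\h)^k$ of strictly maximum modulus among roots of $g$ (again from Theorem \ref{maintheorem11}, since the roots of $g$ are the $k$-th powers of the roots of $\mu(\h,x)$, and $\lambda(\h)^k$ strictly dominates because $\lambda(\h)$ is the strictly dominant one up to the $k$-fold symmetry already factored out). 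I would present whichever of these is cleanest, leaning on Theorem \ref{maintheorem11} for the simplicity and dominance of $\lambda(\h)$, which is the key input that was not available in the graph case in the same form.
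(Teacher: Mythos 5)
Your proposal follows essentially the same route as the paper: the lower bound $c\geq k$ comes from the congruence mod $k$ of all exponents in \eqref{definitionmp}, and the upper bound comes from Theorem \ref{maintheorem11} (every zero of $\mu(\h,x)$ is an eigenvalue of $\A(\T(\h,u))$ and $\lambda(\h)=\rho(\T(\h,u))$ is simple) combined with the Perron--Frobenius spectral symmetry of weakly irreducible nonnegative tensors --- the paper imports from \cite{FanTrans} exactly the two facts you need, namely that the maximum-modulus eigenvalues form the orbit of $\rho$ under the $d$-th roots of unity for $d$ the spectral cyclic index, and that $d\mid k$. One caution: your stated justification that for a connected $k$-graph these eigenvalues form \emph{exactly} the orbit under all $k$-th roots of unity is false in general (for $k=2$ it would make every connected graph bipartite); only the divisibility $d\mid k$ holds a priori, but that containment is all your contradiction requires, and it is precisely \cite[Corollary 4.3]{FanTrans}.
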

\begin{proof}
Denote by $c$ the cyclic index of $\mu(\h,  x)$.
For the $k$-th root of unity $\xi$, one can check that
$$\mu(\h,\xi x)=\sum_{r\geq 0}(-1)^rp(\h,r) (\xi x)^{|V(\h)|-kr}= \xi ^{|V(\h)|} \mu(\h,x).$$
This implies that $\mu(\h,  x)$ is  $k$-symmetric, so we get $k\leq c$.

Since $\lambda(\h)$ is a simple root of $\mu(\h,  x)$ by Theorem  \ref{maintheorem11} and $\mu(\h,  x)$ is  $c$-symmetric,
we get that
$\lambda(\h) e^{\mathbf{i}\frac{2\pi j}{c}},
j = 0, 1,\ldots, c-1$,
are zeros of $\mu(\h,  x)$.
By Theorem  \ref{maintheorem11}, they are eigenvalues of a $k$-walk-tree $\T(\h,\prec,u)$ of $\h$ with modulus $\rho(\T(\h,\prec,u))$.
Let $d$ be the cyclic index of the characteristic polynomial of $\A(\T(\h,\prec,u))$.
Theorem 2.6 and Eq.~(2.7) in~\cite{FanTrans} imply that $\T(\h,\prec,u)$ has exactly $d$ distinct eigenvalues with modulus $\rho(\T(\h,\prec,u))$,
and Corollary 4.3 in~\cite{FanTrans} says that $d| k$.
We therefore derive that $c\leq d\leq k$, so we have $c=k$.
The result follows.
\end{proof}

\begin{remark}\label{remark3.2222}
From the proof of Theorem \ref{zerosymmetricmp}, we also get that $\mu(\h,  x)$ has exactly $k$ distinct zeros with modulus $\lambda(\h)$ and they are equally distributed on complex plane,  that is, they are $\lambda(\h) e^{\frac{2\pi j }{k}\mathbf{i}}, j = 0, 1,\ldots, k-1.$ We therefore conclude that $\mu(\h,  x)$ is $\ell$-symmetric if and only if $\ell$ divides $k$.
\end{remark}

\subsection{The largest zero of the matching polynomial}
In this subsection, we present lower and upper bounds for $\lambda(\h)$ and give the proofs of Theorems~\ref{maintheorem22} and~\ref{Lupperbound}.

\begin{lemma}\label{subpathtree}
Let $\h$ be a connected $k$-graph.
If $\mathcal{G}$ is a subgraph of $\h$, then
$\lambda(\mathcal{G})\leq \lambda(\h)$,
where the equality holds if and only if $\mathcal{G}=\h$.
\end{lemma}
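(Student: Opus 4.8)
The plan is to reduce the statement to the corresponding monotonicity property for path trees, which in turn follows from the Perron--Frobenius theory already available in the excerpt. First I would dispose of the inequality $\lambda(\mathcal{G}) \le \lambda(\h)$ when $\mathcal{G}$ is connected: choose a vertex $u \in V(\mathcal{G}) \subseteq V(\h)$. By Theorem~\ref{maintheorem11} we have $\lambda(\mathcal{G}) = \rho(\T(\mathcal{G},u))$ and $\lambda(\h) = \rho(\T(\h,u))$, so it suffices to show that $\T(\mathcal{G},u)$ is (isomorphic to) a subtree of $\T(\h,u)$ and then invoke Lemma~\ref{Perron-Frobeniuslemma}. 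The containment of path trees is essentially definitional: every nonbacktracking path in $\mathcal{G}$ starting at $u$ is also such a path in $\h$ (since $E(\mathcal{G}) \subseteq E(\h)$ and $V(\mathcal{G}) \subseteq V(\h)$), and the adjacency relation among these paths in $\T(\mathcal{G},u)$ is the one induced from $\T(\h,u)$ (two paths are adjacent exactly when their terminal edges in the continuation relation coincide, a condition that only refers to the paths themselves). Hence the inclusion map on path sets is an embedding of $\T(\mathcal{G},u)$ into $\T(\h,u)$ as a subgraph, and being a subgraph of a $k$-tree it is a subforest; restricting to the component of $\T(\h,u)$ containing the vertex $u$ gives a subtree, which by Lemma~\ref{basicproposition}(1) is what $\T(\mathcal{G},u)$ must be up to isomorphism.

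For a disconnected subgraph $\mathcal{G}$ with components $\mathcal{G}_1,\dots,\mathcal{G}_t$, I would use \eqref{disunion} to write $\lambda(\mathcal{G}) = \max_i \lambda(\mathcal{G}_i)$, apply the connected case to each $\mathcal{G}_i$ (each is a connected subgraph of $\h$), and conclude $\lambda(\mathcal{G}) \le \lambda(\h)$. Isolated vertices contribute only a factor of $x$ to the matching polynomial and do not affect $\lambda$, so they cause no trouble.

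For the equality case, the substantive direction is: if $\mathcal{G} \subsetneq \h$ then $\lambda(\mathcal{G}) < \lambda(\h)$. Here I would argue as follows. Since $\mathcal{G}$ is a proper subgraph, it is contained in a subgraph $\h'$ of $\h$ obtained by deleting a single edge or a single vertex; it suffices (by the already-established monotonicity) to show $\lambda(\h') < \lambda(\h)$ for such a one-step deletion. If $\h' = \h - v$ for some vertex $v$, this is exactly the claim $\lambda(\h) > \lambda(\h-v)$ proved inside the proof of Theorem~\ref{maintheorem11}. If instead $\h' = \h \setminus e$ for an edge $e$ (deleting $e$ and resulting isolated vertices), pick $u \in V(\h')$ adjacent to $e$ in $\h$ but still of positive degree in $\h'$; then $\T(\h',u)$ is again a subtree of $\T(\h,u)$, and it is a \emph{proper} subtree because, for instance, the continuation of some path across the edge $e$ gives a vertex of $\T(\h,u)$ absent from $\T(\h',u)$. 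Then Lemma~\ref{Perron-Frobeniuslemma} gives $\rho(\T(\h',u)) < \rho(\T(\h,u))$, i.e.\ $\lambda(\h') < \lambda(\h)$ via Theorem~\ref{maintheorem11}. Chaining these strict inequalities along a sequence of one-step deletions from $\h$ down to $\mathcal{G}$ finishes the proof.

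The main obstacle I anticipate is the bookkeeping in the equality case: making precise that a proper subgraph differs from $\h$ by at least one vertex or edge deletion, handling the interaction between edge deletion and resultant isolated vertices, and verifying that the path-tree inclusion is \emph{strict} in the edge-deletion step (rather than merely non-strict). The connected-subgraph inclusion of path trees, while intuitively clear, also deserves a careful one-line justification that the continuation and adjacency relations are inherited, since this is the only place the combinatorial structure of Definition~\ref{pathtreedefinition} is used.
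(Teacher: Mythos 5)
Your proposal is correct and follows essentially the same route as the paper: reduce to connected $\mathcal{G}$, observe that $\T(\mathcal{G},u)$ embeds in $\T(\h,u)$, and combine Theorem~\ref{maintheorem11} with Lemma~\ref{Perron-Frobeniuslemma}. The only real difference is the equality case, where you chain one-step vertex/edge deletions and reuse the claim $\lambda(\h)>\lambda(\h-v)$; the paper instead notes directly that $\T(\mathcal{G},u)$ is a \emph{proper} subgraph of $\T(\h,u)$ whenever $\mathcal{G}\neq\h$ (any missing edge $e$ is the terminal edge of some nonbacktracking path from $u$, e.g.\ via a shortest path), so Lemma~\ref{Perron-Frobeniuslemma} already yields strictness in one stroke — this avoids the deletion-chain bookkeeping you flag as the main obstacle.
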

\begin{proof}
Without loss of generality, we may assume that $\mathcal{G}$ is connected.
Otherwise, we can get the result by considering the components of $\mathcal{G}$ and using Lemma~\ref{basiclemma}(1).
Let $\prec$ be a linear ordering on $V(\h)$, 
and let $u\in V(\mathcal{G})$ be the unique vertex such that $u \prec w$ for every $w\in V(\mathcal{G})$.
Since $\mathcal{G}$ is a subgraph of $\h$ containing $u$, $\T(\mathcal{G},\prec,u)$ is a subgraph of $\T(\h,\prec,u)$.
	By~\eqref{threeequality} and Lemma \ref{PerronFrobeniuslemma}, we have
	$$\lambda(\mathcal{G})=\rho(\T(\mathcal{G},\prec,u))\leq \rho(\T(\h,\prec,u))=\lambda(\h),$$
where the equality holds if and only if $\T(\mathcal{G},\prec,u) = \T(\h,\prec,u)$.
Observe that $\T(\mathcal{G},\prec,u)=\T(\h,\prec,u)$ if and only if
$\mathcal{G}=\h$. The result follows.
\end{proof}

\begin{corollary}\label{coro22}
Let $\h$ be a connected $k$-graph with maximum degree ${\Delta}$.
Then $\lambda(\h)\geq \Delta^{1\over k}$, where the
equality holds if and only if all the edges of $\h$  share a common vertex.
\end{corollary}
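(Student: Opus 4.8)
The plan is to compare $\h$ with a star-like subgraph whose matching polynomial is transparent, and then invoke Lemma~\ref{subpathtree}. First I would fix a vertex $v$ of $\h$ with $d_\h(v)=\Delta$ and let $\mathcal{G}$ be the subgraph of $\h$ with edge set $E_\h(v)$ and vertex set $\bigcup_{e\in E_\h(v)}e$. Since all $\Delta$ edges of $\mathcal{G}$ pass through $v$, no two of them are disjoint, so $p(\mathcal{G},0)=1$, $p(\mathcal{G},1)=\Delta$ and $p(\mathcal{G},r)=0$ for $r\ge 2$. By the definition~\eqref{definitionmp} this gives
$$\mu(\mathcal{G},x)=x^{|V(\mathcal{G})|}-\Delta\,x^{|V(\mathcal{G})|-k}=x^{|V(\mathcal{G})|-k}\big(x^{k}-\Delta\big),$$
whose nonzero roots are exactly the $k$-th roots of $\Delta$, each of modulus $\Delta^{1/k}$; hence $\lambda(\mathcal{G})=\Delta^{1/k}$.

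Next, applying Lemma~\ref{subpathtree} to the subgraph $\mathcal{G}\subseteq\h$ immediately yields $\lambda(\h)\ge\lambda(\mathcal{G})=\Delta^{1/k}$, with equality if and only if $\mathcal{G}=\h$, that is, if and only if $E(\h)=E_\h(v)$, equivalently, if and only if every edge of $\h$ contains $v$.

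It then only remains to reconcile this equality condition with the statement. If every edge of $\h$ contains $v$, then certainly all edges of $\h$ share a common vertex. Conversely, suppose all edges of $\h$ share a common vertex $z$; then $d_\h(z)=|E(\h)|\ge d_\h(w)$ for every vertex $w$, so $\Delta=|E(\h)|$ and in particular $d_\h(v)=|E(\h)|$, which forces $v$ to lie in every edge, i.e.\ $\mathcal{G}=\h$. (Alternatively, one may observe directly that in this case $m(\h)=1$, so $\mu(\h,x)=x^{|V(\h)|-k}(x^{k}-|E(\h)|)$ and $\lambda(\h)=|E(\h)|^{1/k}=\Delta^{1/k}$.) I do not expect a genuine obstacle here once Lemma~\ref{subpathtree} is in hand; the only point requiring a moment's care is this last observation, that a common vertex of all edges of $\h$ is automatically a vertex of maximum degree, which is exactly what makes the equality condition in the statement equivalent to $\mathcal{G}=\h$.
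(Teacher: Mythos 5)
Your proof is correct and follows essentially the same route as the paper: compute $\lambda$ of a $\Delta$-edge star explicitly and invoke Lemma~\ref{subpathtree}, the only cosmetic difference being that you apply the lemma directly to the star subgraph $\mathcal{G}\subseteq\h$ at a maximum-degree vertex, whereas the paper applies it to $\mathcal{S}_\Delta$ sitting inside the path tree $\T(\h,u)$ and then uses $\lambda(\h)=\lambda(\T(\h,u))$. Your careful reconciliation of the equality condition (a common vertex of all edges is automatically of maximum degree) is exactly the point the paper also has to address, and it is handled correctly.
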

\begin{proof}
Denote by $\mathcal{S}_\Delta$ the $k$-star with maximum degree $ {\Delta}$, that is,
the $k$-tree consisting of $\Delta$ edges sharing a common vertex.
Clearly,
$$\mu(\mathcal{S}_\Delta,x)
=x^{(k-1)\Delta -k+1}(x^k-\Delta),$$
and thus   $\lambda(\mathcal{S}_\Delta)=\Delta^{1\over k}$.

Let $u$ be a vertex of $\h$ with $d_\h(u)=\Delta$, and let $\prec$ be a linear ordering on $V(\h).$
Then $\T(\h,\prec,u)$ contains $\mathcal{S}_\Delta$ as a subtree.
By \eqref{threeequality} and Lemma~\ref{subpathtree}, we have
	$$\lambda(\h)=\lambda (\T(\h,\prec,u)) \geq \lambda(\mathcal{S}_\Delta)=\Delta^{1\over k}$$
	with equality holds  if and only if $\T(\h,\prec,u)=\mathcal{S}_\Delta.$
	If all the edges of $\h$ share a common vertex $u$, then $\T(\h,\prec,u)=\mathcal{S}_\Delta$ by the definition of $k$-walk-tree.
	Conversely, if $\T(\h,\prec,u)=\mathcal{S}_\Delta$, then $\h$ has exactly $\Delta$ edges as $|E(\h)|\leq |E(\T(\h,\prec,u))|=\Delta$ and $\Delta(\h)=\Delta$,
	which implies that all the edges of $\h$ have a common vertex.
\end{proof}

To establish the upper bound of $\lambda(\h)$,
we need the following auxiliary lemma.

\begin{lemma}\label{lemma}
Let $\h$ be a  connected $k$-graph with maximum degree $\Delta$  and let $\xi\geq \max\{\Delta,2\}$ be an integer.
If $u\in V(\h)$ and $d_\h(u)< \xi$, then
\begin{align}\nonumber
\frac{\mu(\h, x)}{\mu(\h-u,x)}> { \big((k-1)(\xi-1)\big)^{ 1\over k } }
\end{align}
whenever $x\geq \frac{k}{k-1}\big((k-1)( \xi-1)\big)^{1\over k}$.
\end{lemma}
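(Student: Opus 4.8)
The plan is to prove, by strong induction on the order $|V(\h)|$, a slightly strengthened assertion. Abbreviate $\beta:=\big((k-1)(\xi-1)\big)^{1/k}$ and $x_0:=\tfrac{k}{k-1}\beta$, so that $x_0>\beta>0$. I claim that for \emph{every} connected $k$-graph $\mathcal{K}$ with $\Delta(\mathcal{K})\le\xi$ and every vertex $w\in V(\mathcal{K})$ with $d_{\mathcal{K}}(w)<\xi$, one has $\mu(\mathcal{K}-w,x)>0$ and $\mu(\mathcal{K},x)/\mu(\mathcal{K}-w,x)>\beta$ for all real $x\ge x_0$. Carrying the positivity statement $\mu(\mathcal{K}-w,x)>0$ inside the induction is exactly what legitimizes the recursive estimates below, since it guarantees that none of the matching polynomials that occur as denominators vanishes on $[x_0,\infty)$. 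The base case $|V(\h)|=1$ is immediate: $\h$ is a single vertex with $d_\h(u)=0<\xi$, $\mu(\h-u,x)=1>0$, and $\mu(\h,x)/\mu(\h-u,x)=x\ge x_0>\beta$.

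For the inductive step I would first isolate a positivity fact that gets used repeatedly: if $\mathcal{J}$ is obtained from $\h$ by deleting a \emph{nonempty} vertex set $S$, then $\mu(\mathcal{J},x)>0$ for $x\ge x_0$. Indeed, connectivity of $\h$ forces every component $\mathcal{C}$ of $\mathcal{J}$ to contain a vertex $w'$ adjacent in $\h$ to $S$; the $\h$-edge through $w'$ and $S$ is absent in $\mathcal{C}$, so $d_{\mathcal{C}}(w')\le\Delta-1\le\xi-1<\xi$, and since $|V(\mathcal{C})|<|V(\h)|$ the induction hypothesis applies to $(\mathcal{C},w')$ and yields $\mu(\mathcal{C}-w',x)>0$ and $\mu(\mathcal{C},x)/\mu(\mathcal{C}-w',x)>\beta>0$, hence $\mu(\mathcal{C},x)>0$; multiplying over components gives $\mu(\mathcal{J},x)>0$. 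In particular $\mu(\h-u,x)>0$, which is half of the claim. (If $|V(\h)|\ge 2$ then $d_\h(u)\ge 1$ by connectivity, so $E_\h(u)\ne\emptyset$; the case $d_\h(u)=0$ is covered by the base case.)

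Next I would apply the vertex-deletion recursion (Lemma~\ref{basiclemma}),
\[
\frac{\mu(\h,x)}{\mu(\h-u,x)} = x - \sum_{e\in E_\h(u)}\frac{\mu(\h-e,x)}{\mu(\h-u,x)},
\]
and bound each summand. Fix $e=\{u=v_1,v_2,\dots,v_k\}\in E_\h(u)$ and put $V_i=\{v_1,\dots,v_i\}$, so $\h-e=\h-V_k$ and $\h-V_1=\h-u$. Each $\mu(\h-V_i,x)$ is positive on $[x_0,\infty)$ by the positivity fact, so we may telescope $\frac{\mu(\h-e,x)}{\mu(\h-u,x)}=\prod_{i=1}^{k-1}\frac{\mu(\h-V_{i+1},x)}{\mu(\h-V_i,x)}$. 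For each factor, Lemma~\ref{disconnected} gives $\frac{\mu(\h-V_{i+1},x)}{\mu(\h-V_i,x)}=\frac{\mu(\mathcal{G}_i-v_{i+1},x)}{\mu(\mathcal{G}_i,x)}$, where $\mathcal{G}_i$ is the component of $\h-V_i$ containing $v_{i+1}$. Since $v_{i+1}$ lies on the edge $e$, which contains the deleted vertex $u\in V_i$, we get $d_{\mathcal{G}_i}(v_{i+1})\le\Delta-1\le\xi-1<\xi$; as $|V(\mathcal{G}_i)|<|V(\h)|$, the induction hypothesis applied to $(\mathcal{G}_i,v_{i+1})$ gives $\mu(\mathcal{G}_i,x)/\mu(\mathcal{G}_i-v_{i+1},x)>\beta$, i.e. each factor is positive and strictly less than $\beta^{-1}$. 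Hence $\frac{\mu(\h-e,x)}{\mu(\h-u,x)}<\beta^{-(k-1)}$ for every $e\in E_\h(u)$.

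Finally, using $d_\h(u)\le\xi-1$ and the identity $(\xi-1)\beta^{-(k-1)}=\frac{\beta^{k}}{(k-1)\beta^{k-1}}=\frac{\beta}{k-1}=x_0-\beta$, I conclude for $x\ge x_0$ that
\[
\frac{\mu(\h,x)}{\mu(\h-u,x)} > x - d_\h(u)\,\beta^{-(k-1)} \ge x-(\xi-1)\beta^{-(k-1)} = x-(x_0-\beta) \ge \beta,
\]
the first inequality being strict because $E_\h(u)\ne\emptyset$. This closes the induction, and the lemma is the special case $\mathcal{K}=\h$, $w=u$. The part I expect to be the main obstacle is not the algebra but the positivity bookkeeping — ensuring that every matching polynomial appearing in a denominator is nonzero on $[x_0,\infty)$, which forces one to fold $\mu(\mathcal{K}-w,x)>0$ into the inductive statement and to exploit connectivity of $\h$ to find a low-degree vertex in each component of every $\h-S$; once that is set up, the rest is the expected Heilmann–Lieb-type telescoping estimate.
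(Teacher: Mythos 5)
Your proof is correct and follows essentially the same route as the paper's: induction on $|V(\h)|$, the vertex-deletion recursion of Lemma~\ref{basiclemma}, the telescoping product over $V_1\subset\cdots\subset V_k$ for each $e\in E_\h(u)$ with each factor bounded by $\big((k-1)(\xi-1)\big)^{-1/k}$ via the induction hypothesis (using that $v_{i+1}$ loses the edge $e$, so its degree drops below $\xi$), and the same closing algebra. The one genuine refinement is that you fold the positivity of all denominators $\mu(\h-V_i,x)$ on $\big[\tfrac{k}{k-1}\big((k-1)(\xi-1)\big)^{1/k},\infty\big)$ into the inductive statement, a bookkeeping step the paper leaves implicit, so your write-up is if anything slightly more complete.
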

\begin{proof}
We prove the statement by induction on $n=|V(\h)|$.
In this proof, we always assume that $x\geq \frac{k}{k-1}\big((k-1)( \xi-1)\big)^{1\over k}$.
If $n=k$,  then $\h$ is the $k$-graph consisting of a single edge,
and hence that
\begin{align} \nonumber
\frac{\mu(\h, x)}{\mu(\h-u,x)}
= \frac{  x^k-1 }{x^{k-1}}
=x-\frac{ 1 }{x^{k-1}}
>& \big((k-1)( \xi-1)\big)^{1\over k},
\end{align}
where the inequality follows from the calculation:
 \begin{align}  \nonumber
x-\frac{ 1 }{x^{k-1}}
\geq &  \frac{k}{k-1}\big((k-1)( \xi-1)\big)^{1\over k} -   \frac{ 1  }{ \frac{  k ^{k-1} }{(k-1)^{k-1}} \big((k-1)( \xi-1)\big) ^{k-1\over k}   }  \\\nonumber
=& \big((k-1)( \xi-1)\big)^{1\over k}  \left(1+ \frac{1}{k-1}\left(1-   \frac{ (k-1)^{k-1}    }{  ( \xi-1)k^{k-1 }  } \right) \right)\\\nonumber
>& \big((k-1)( \xi-1)\big)^{1\over k}.
 \end{align}

We now assume that $n \geq k+1$.
By the connectedness of $\h$ and the choice of $\xi$,
we have $2 \leq \Delta \leq \xi$.
For every edge $e=\{u,v_2, \ldots, v_k\}\in E_\h(u)$,
write $V_1(e)=\{u\}$ and let $V_i(e)=\{u,v_2, \ldots, v_i\}$ for $i=2,\ldots,k$.
Note that for every $i \in [k-1]$,
we have $\Delta(\h-V_{i}(e))\leq \Delta $ and $$d_{\h-V_{i}(e)}(v_{i+1})<d_{\h}(v_{i+1})  \leq  \Delta\leq  \xi,$$
which implies that we can apply the induction hypothesis to the component of $\h-V_{i}(e)$ containing the vertex $v_{i+1}$.
Combining this and Lemma~\ref{basiclemma}(1), we further derive that for every $i \in [k-1]$,
$$
\frac{\mu(\h-V_{i}(e),x) }{\mu(\h-V_{i+1}(e),x)}
>  \big((k-1)(\xi-1)\big)^{1\over k}.
$$
Thus, for every $e\in E_\h(u)$,
$$
\frac{\mu(\h-u,x)}{\mu(\h-e, x)}
=\prod_{i=1}^{k-1}\frac{\mu(\h-V_{i}(e),x) } {\mu(\h-V_{i+1}(e),x)}
>   \big((k-1)(\xi-1)\big)^{k-1\over k}.
$$
Now, combining Lemma \ref{basiclemma}(2), the above inequality, and the assumption $d_{\h}(u) \leq \xi-1$, one may check that
\begin{align}\nonumber
  \frac{\mu(\h, x)}{\mu(\h-u,x)}
  =& x- \sum_{e\in E_\h(u)}\frac{\mu(\h-e, x)}{\mu(\h-u,x)}\\\nonumber
 > & \frac{k}{k-1}\big((k-1)(\xi-1)\big)^{1\over k}-\frac{\xi-1}{ { \big((k-1)(\xi-1)\big)^{ k-1 \over k } }}  \\\nonumber
 = &  \big((k-1)(\xi-1)\big)^{ 1\over k }.
 \end{align}
This completes the proof.
\end{proof}

\begin{theorem}\label{maintheoremsec4}
Let $\h$ be a connected $k$-graph with maximum degree ${\Delta}\geq  2$.
Then $$\lambda(\h)< \frac{k}{k-1}\big((k-1)(\Delta-1)\big)^{1\over k}.$$
\end{theorem}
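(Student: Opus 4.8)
The plan is to reduce the global bound on $\lambda(G)$ to the local estimate already packaged in Lemma~\ref{lemma}, and the key observation is that Lemma~\ref{lemma} applies with $\xi=\Delta$ at every vertex whose degree is strictly less than $\Delta$. So the first step is to handle the case where $G$ has a vertex $u$ with $d_G(u)<\Delta$ (which exists unless $G$ is $\Delta$-regular): apply Lemma~\ref{lemma} with $\xi=\Delta$ to conclude that
\[
\frac{\mu(G,x)}{\mu(G-u,x)}>\big((k-1)(\Delta-1)\big)^{1/k}>0
\]
for all $x\ge \frac{k}{k-1}\big((k-1)(\Delta-1)\big)^{1/k}=:R$. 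In particular $\mu(G,x)\ne 0$ for $x\ge R$. Since $\mu(G,x)$ has real coefficients and positive leading coefficient, and $\lambda(G)$ is a \emph{root} of $\mu(G,x)$ (Theorem~\ref{maintheorem11}), if $\lambda(G)\ge R$ were real we would already have a contradiction; but $\lambda(G)$ is the \emph{modulus} of a zero, so I must instead argue via the $k$-symmetry. By Theorem~\ref{zerosymmetricmp} and Remark~\ref{remark3.2222}, $\mu(G,x)$ is $k$-symmetric with $\lambda(G)$ itself among its zeros (taking $j=0$ in $\lambda(G)e^{2\pi j\mathbf{i}/k}$), so $\lambda(G)$ is a genuine positive real root; hence $\lambda(G)<R$, which is exactly the claimed inequality.

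The remaining case is $G$ being $\Delta$-regular, where Lemma~\ref{lemma} does not directly apply to any vertex of $G$. Here I would pass to the path tree: fix $u\in V(G)$ and recall $\lambda(G)=\rho(\T(G,u))=\lambda(\T(G,u))$ by \eqref{eqsubsection332}. The path tree $\T(G,u)$ has maximum degree $\Delta$ (its degrees are bounded by those of $G$), and crucially its root vertex $u$ has degree $d_{\T(G,u)}(u)=d_G(u)=\Delta$, but one can instead observe that $\T(G,u)$ is \emph{not} $\Delta$-regular — its leaves have degree $1<\Delta$ — so there is a vertex $w$ of $\T(G,u)$ with $d_{\T(G,u)}(w)<\Delta$. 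Applying Lemma~\ref{lemma} to the connected $k$-graph $\T(G,u)$ at the vertex $w$ with $\xi=\Delta$ gives $\mu(\T(G,u),x)\ne 0$ for $x\ge R$, and since $\lambda(\T(G,u))$ is a positive real root (again by Theorem~\ref{maintheorem11} and Theorem~\ref{zerosymmetricmp} applied to the connected $k$-graph $\T(G,u)$), we get $\lambda(G)=\lambda(\T(G,u))<R$ as desired.

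Actually the two cases merge: in all cases it suffices to find \emph{some} connected $k$-graph $\mathcal{H}$ with $\lambda(\mathcal{H})=\lambda(G)$, maximum degree $\le\Delta$, and a vertex of degree $<\Delta$; the path tree $\T(G,u)$ always works when $|V(G)|>1$ since path trees of a $k$-graph with at least one edge always have leaves. So the cleanest writeup is: take $\mathcal{H}=\T(G,u)$, pick a leaf-type vertex $w$ with $d_{\mathcal{H}}(w)=1<\Delta$, apply Lemma~\ref{lemma} with $\xi=\Delta$ to get $\mu(\mathcal{H},x)>0$ for $x\ge R$, invoke Theorem~\ref{maintheorem11} to see $\lambda(\mathcal{H})$ is a root of $\mu(\mathcal{H},x)$ and Remark~\ref{remark3.2222} (or just that $\lambda(\mathcal{H})e^{0}$ is a zero) to see it is a nonnegative real root, and conclude $\lambda(G)=\lambda(\mathcal{H})<R$.

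The main obstacle, and the only subtle point, is making sure $\lambda(\mathcal{H})$ is legitimately treated as a \emph{real} root of $\mu(\mathcal{H},x)$ rather than merely the modulus of some complex root — this is where the $k$-symmetry of the matching polynomial (Theorem~\ref{zerosymmetricmp}/Remark~\ref{remark3.2222}) is essential, since it guarantees that $\lambda(\mathcal{H})\cdot 1$ is itself a zero. Everything else is routine: verifying that path trees have leaves, that their maximum degree is at most $\Delta(G)$, and bookkeeping the hypothesis $\xi\ge\max\{\Delta,2\}$ of Lemma~\ref{lemma} (which holds since $\Delta\ge 2$).
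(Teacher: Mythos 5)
Your proposal is correct, but it proves the bound by a genuinely different route than the paper. The paper runs a direct induction on $|V(G)|$: it picks a minimum-degree vertex $u$, uses the vertex-deletion recursion of Lemma~\ref{basiclemma} together with Lemma~\ref{lemma} applied to the intermediate graphs $G-V_i(e)$ (where the deleted vertex forces $d_{G-V_i}(v_{i+1})<\Delta$), and concludes $\mu(G,x)>0$ for all $x\ge \frac{k}{k-1}\big((k-1)(\Delta-1)\big)^{1/k}$, carrying along the induction hypothesis that the denominators $\mu(G-v,x)$ are nonzero. You instead transfer the whole problem to the path tree: $\lambda(G)=\rho(\T(G,u))=\lambda(\T(G,u))$ by Theorem~\ref{maintheorem11}, the finite tree $\T(G,u)$ has maximum degree at most $\Delta$ and possesses a leaf of degree $1<\Delta$, so Lemma~\ref{lemma} applies \emph{once}, at that leaf, with $\xi=\Delta$. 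This neatly sidesteps the regularity obstruction that motivated your case split, and it is essentially the alternative proof the authors allude to in Section~5 (via Theorem~\ref{Lupperbound}), except that you bound $\rho(\T(G,u))$ by Lemma~\ref{lemma} rather than by citing Friedman's theorem. What the paper's induction buys is self-containedness at this point of the argument; what yours buys is brevity and a single application of the local lemma.

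Two small points to tighten. First, your detour through Theorem~\ref{zerosymmetricmp} and Remark~\ref{remark3.2222} to certify that $\lambda(G)$ is a real root is unnecessary: Theorem~\ref{maintheorem11} already identifies $\lambda(G)$ with $\rho(\T(G,u))$, a nonnegative real number, and asserts it is a root of $\mu(G,x)$, so ``$\mu>0$ on $[R,\infty)$ implies $\lambda(G)<R$'' goes through directly. Second, to pass from the ratio inequality of Lemma~\ref{lemma} to ``$\mu(\T(G,u),x)\ne 0$ for $x\ge R$'' you implicitly need the denominator $\mu(\T(G,u)-w,x)$ to be nonzero there; the paper secures the analogous fact by its induction hypothesis. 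In your setting this is easily patched (e.g.\ by induction on the number of edges of the tree, or by noting that each component of $\T(G,u)-w$ is a proper subtree, whence by Theorem~\ref{Mthm} and Lemma~\ref{Perron-Frobeniuslemma} its largest real root is strictly smaller than $\rho(\T(G,u))$, so one can argue from the largest zero downward), but a sentence to this effect should be added.
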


\begin{proof}
Let $\h$ be a connected $k$-graph with maximum degree ${\Delta}\geq  2$. By Theorem \ref{maintheorem11}, $\lambda(\h)$ is the largest real zero of $\mu(\h, x)$, so
it suffices to show that $\mu(\h, x)> 0$ whenever $x\geq \frac{k}{k-1}\big((k-1)(\Delta-1)\big)^{1\over k}$.
We prove it by induction on $n$.
Let $u$ be a vertex of $\h$ with $d_\h(u)=\delta(\h)$, and we always assume that $x\geq \frac{k}{k-1}\big((k-1)(\Delta-1)\big)^{1\over k}$ in this proof.

If $n=k+1$, then $\h$ consists of two edges sharing $k-1$ vertices by the connectedness of $\h$.
In this case, we have $\Delta=2$,
$d_\h(u)=\delta(\h)=1$,
and $x^k \geq \frac{k^k}{(k-1)^{k-1}}>2.$
So we have
$$\mu(\h-u, x)=x^k-1>1.$$
Moreover, by Lemma \ref{basiclemma}(2),
one may check that
$$\frac{\mu(\h, x)}{\mu(\h-u,x)}
=x-\sum_{e\in E_\h(u)} \frac{\mu(\h-e, x)}{\mu(\h-u,x)}
=x- \frac{x}{x^k-1}
=x\left(1-\frac{1}{x^k-1}\right)>0.$$
The above two inequalities suggest that
$\mu(\h, x)> 0$ whenever $x\geq \frac{k}{k-1}\big((k-1)(\Delta-1)\big)^{1\over k}$,
so the base case of the induction holds.

Assume that $n>k+1$.
For every edge $e=\{u,v_2, \ldots, v_k\}\in E_\h(u)$,
write $V_1(e)=\{u\}$ and let $V_i(e)=\{u,v_2, \ldots, v_i\}$ for $i=2,\ldots,k$.
For every $i \in [k-1]$,
observe that $d_{\h-V_{i}(e)}(v_{i+1})< d_{\h}(v_{i+1})\leq \Delta$.
Thus, we may apply Lemma~\ref{lemma}, with choosing $\xi=\Delta \geq \max\{\Delta(\h-V_{i}(e)),2\}$, to the component of $\h-V_{i}(e)$ containing the vertex $v_{i+1}$.
Combining this and Lemma~\ref{basiclemma}(1),
 we further obtain that for every $i \in [k-1]$,
$$
\frac{\mu(\h-V_{i}(e),x) }{\mu(\h-V_{i+1}(e),x)}
> \big((k-1)(\Delta-1)\big)^{1\over k}.
$$
Thus, for every edge $e\in E_\h(u)$,
$$
\frac{\mu(\h-u,x)}{\mu(\h-e, x)}
=\prod_{i=1}^{k-1}  \frac{\mu(\h-V_{i}(e),x) }{\mu(\h-V_{i+1}(e),x)}
> \big((k-1)(\Delta-1)\big)^{k-1\over k}.
$$
Combining Lemma~\ref{basiclemma}(1), the above inequality, and the fact that $d_\h(u)=\delta(\h)\leq \Delta$, one may check that
\begin{eqnarray*}
 \frac{\mu(\h, x)}{\mu(\h-u,x)}
  &=&x-\sum_{e\in E_\h(u)} \frac{\mu(\h-e, x)}{\mu(\h-u,x)} \\
  &>& \frac{k}{k-1}\big((k-1)(\Delta-1)\big)^{1\over k}-\frac{\Delta}{ \big((k-1)(\Delta-1)\big)^{k-1 \over k} }\\
  &=& \frac{k(\Delta-1)-    \Delta }{ \big((k-1)(\Delta-1)\big)^{k-1\over k} }\\
  &\geq& 0.
\end{eqnarray*}
Using it, to show $\mu(\h, x)> 0$,
it suffices to prove that $\mu(\h-u,x)>0$.
By Lemma~\ref{basiclemma}(1), we need to prove that
$\mu(\G,x)>0$ for every component $\G$ of $\h-u$.
Given a component $\G$ of $\h-u$.
If $\Delta(\G) \geq 2$, then $\mu(\G,x)>0$ follows from the fact that $\Delta(\G) \leq\Delta(\h)$ and the induction hypothesis.
If $\Delta(\G) = 1$, then $\G$ is the $k$-graph consisting of a single edge,
and hence that
$\mu(\G,x)=x^k-1>0$ since $x\geq \frac{k}{k-1}\big((k-1)(\Delta-1)\big)^{1\over k}>1.$
Finally, if $\Delta(\G) = 0$, then $\G$ is the $k$-graph consisting of a single isolated vertex,
and hence that $\mu(\G,x)=x >0$.
This completes the proof of the induction step and establishes the result.
\end{proof}

We now have all the tools to prove Theorem~\ref{maintheorem22} and give a new proof of Theorem~\ref{Lupperbound}.

\begin{proof} [\proofname{ of \bf Theorem~\ref{maintheorem22}.}]
Let $\h$ be a connected $k$-graph with maximum degree ${\Delta}\geq  2$.
Theorem~\ref{zerosymmetricmp} states that the cyclic index of $\mu(\h, x)$ is $k$,
and Theorem~\ref{maintheorem11} implies that $\lambda(\h)$ is a simple root of $\mu(\h,x)$.
Finally, the inequality~\eqref{mr} follows from Corollary~\ref{coro22} and Theorem~\ref{maintheoremsec4}.
The proof is completed.
\end{proof}

\begin{proof}[\proofname{ of \bf Theorem~\ref{Lupperbound}.}]
Let $\T$ be a $k$-tree with maximum degree $\mathnormal{\Delta}\geq  2$.
Then Lemma~\ref{ktreethreeequal} states that $\rho(\T)=\lambda(\T)$,
and the result follows from the upper bound of Theorem~\ref{maintheorem22}.
\end{proof}

%
%

\section{Concluding Remarks}\label{ConcludingRemarks}

In this paper, we present a fundamental characterization of the distribution of the
 zeros of the matching polynomials of $k$-graphs
and generalize some results on the classical matching polynomial to $k$-graphs.
Note that most of the results in this paper can be extended to the multivariate weighted $k$-graphs, the $k$-graph $\h=(V,E)$ associated with an edge-weighted function $\mathbf{w}: E\rightarrow \C$ and a vertex-indeterminate $\mathbf{x}=\{x_v\}_{v\in V}$, with some appropriate adjustment.
For the sake of simplicity, we chose not to pursue that direction in detail.

There is another interesting function related to the matching polynomial,
the {\it matching generating function} of a $k$-graph $\h$, which is defined by
$$m(\h,x)=\sum_{r\geq 0} p(\h,r) x^r.$$
Note that
$$\mu(\h, x)=\sum_{r\geq 0}(-1)^rp(\h,r) x^{|V(\h)|-kr}=x^{|V(\h)|}\sum_{r\geq 0}p(\h,r) (-x^{-k})^r,$$
so we have
$$
\mu(\h,x)=x^{|V(\h)|}m(\h,-x^{-k}).
$$
Therefore, we can obtain some results similar to Theorem~\ref{d2} and Theorem~\ref{maintheoremsec4} for the matching generating function.

As mentioned in Section~1, the result of Li and Mohar~\cite{LiMohar} indicates that  for a connected $k$-graph $\h$ with maximum degree $\Delta$, the threshold bound  $$\frac{k}{k-1}\big((k-1)(\Delta-1)\big)^{1\over k}$$
plays an important role in studying the second  eigenvalue of $\h$.
Besides, Theorem~\ref{Lupperbound} states that this value is exactly an upper bound of the spectral radius of a $k$-tree with maximum degree $\Delta \geq 2$.
In fact, by combining Theorem~\ref{maintheorem11} and Theorem~\ref{Lupperbound},
we may obtain another proof for the upper bound of Theorem~\ref{maintheorem22}.
Therefore, in the current setting, Theorem~\ref{maintheorem22} can be viewed as a new version of Theorem~\ref{Lupperbound} from the view point of matching polynomials.
 The main idea of~\cite{Marcus} seems to imply that the former is more essential than the latter in the study of the second eigenvalues of hypergraphs and Ramanujan hypergraphs.

A sequence $a_0, a_1,\ldots, a_n$, of real numbers
is said to be {\it logarithmically concave} (or log-concave for short) if $a_i^2\geq a_{i-1}a_{i+1}$ for all $1 \leq i\leq n- 1$.
Many important sequences in combinatorics are known to be log-concave. We refer the reader to a survey by Stanley~\cite{Stanley89} for various examples and more background.
Applying  the rooted-rootedness of the matching polynomial in Theorem~\ref{Heilmanntheorem}, Heilmann and Lieb~\cite{Heilmann} prove that the matching number sequence $\{p(G,r)\}_{r\geq 0}$ of a graph $G$ is log-concave.
However, for $k\geq 3$, the real-rootedness for matching polynomials of $k$-graphs is invalid as proved in Theorem~\ref{maintheorem22}. Thus, it would be interesting to study the log-concave property of the matching number sequence of a $k$-graph.

\section*{Acknowledgements}
The authors would like to thank Hyunwoo Lee for useful comments.



\begin{thebibliography}{10}



\bibitem{Alon} N. Alon,
Eigenvalues and expanders,
{\em Combinatorica} {\bf 6} (1986), 83--96.





\bibitem{Chen} W. Chen and C.Y. Ku,
An analogue of the Gallai--Edmonds structure theorem for nonzero roots of the matching polynomial,
{\em J. Combin. Theory Ser. B} {\bf 100} (2010), 119--127.


\bibitem{ClarkCooper} G. Clark and J. Cooper,
On the adjacency spectra of hypertrees,
{\em Electron. J. Combin.} {\bf 25(2)} (2018), $\#$P2.48.


\bibitem{CD} J. Cooper and A. Dutle,
Spectra of uniform hypergraphs,
{\em Linear Algebra Appl.} {\bf 436} (2012), 3268--3292.





\bibitem{FanTrans}
Y. Fan, T. Huang, Y. Bao, C. Zhuan-Sun, and Y. Li,
The spectral symmetry of weakly irreducible nonnegative tensors and connected hypergraphs,
{\em Trans. Amer. Math. Soc.} {\bf 372} (2019), 2213--2233.



\bibitem{Farrell} E.J. Farrell,
An introduction to matching polynomials,
{\em J. Combin. Theory Ser. B} {\bf 27} (1979), 75--86.


\bibitem{Friedman} J. Friedman,
The spectra of infinite hypertrees,
{\em SIAM J. Comput.}  {\bf 20} (1991), 951--961.

\bibitem{FriedmanC95} J. Friedman and A. Wigderson,
On the second eigenvalue of hypergraphs,
{\em Combinatorica} {\bf 15} (1995), 43--65.

\bibitem{Friedman2008} J. Friedman,
A proof of Alon's second eigenvalue conjecture and related problems,
vol. 195, {\em Memoirs of the AMS}, {\bf 195} (2008), no.910.


\bibitem{Godsil2} C.D. Godsil,
Matchings and walks in graphs,
{\em J. Graph Theory} {\bf 5} (1981), 285--297.

\bibitem{Godsil4} C.D. Godsil,
Matching behaviour is asymptotically normal,
{\em Combinatorica} {\bf 1} (1981), 369--376.


\bibitem{Godsil} C.D. Godsil,
Algebraic Combinatorics,
Chapman \&  Hall, New York, 1993.


\bibitem{Godsil3} C.D. Godsil and I. Gutman,
On the theory of the matching polynomial,
{\em J. Graph Theory} {\bf 5} (1981), 137--144.












\bibitem{Ha} R. Hartshorne,
Algebraic Geometry,
Springer--Verlag, New York, 1977.


\bibitem{Hall} C. Hall, D. Puder, and W.F. Sawin,
Ramanujan coverings of graphs,
{\em Adv. Math.} {\bf 323} (2018), 367--410.




\bibitem{Heilmann1} O.J. Heilmann and E.H. Lieb,
Monomers and dimers,
{\em Phvs. Rev. Lett.} {\bf 24} (1970), 1412--1414.

\bibitem{Heilmann} O.J. Heilmann and E.H. Lieb,
Theory of monomer-dimer systems,
{\em Comm. Math. Phys.} {\bf 25} (1972), 190--232.

\bibitem{Hoory} S. Hoory, N. Linial, and A. Wigderson,
Expander graphs and their applications,
{\em Bull. Amer. Math. Soc.} {\bf 43} (2006), 439--561.



\bibitem{KahnKim} J. Kahn and J.H. Kim,
Random matchings in regular graphs,
{\em Combinatorica} {\bf 18} (1998), 201--226.


\bibitem{KF} M. Khan and Y. Fan,
On the spectral radius of a class of non-odd-bipartite even uniform hypergraphs,
{\em Linear Algebra Appl.} {\bf 480} (2015), 93--106.


\bibitem{Lee} H. Lee,
Random matchings in linear hypergraphs,
available at arXiv:2406.06421v2.


\bibitem{LM} J. Lenz and D. Mubayi,
Eigenvalues and linear quasirandom hypergraphs,
{\em Forum Math. Sigma} {\bf 3} (2015), e2, 26 pp.





\bibitem{LiMohar} H. Li and B. Mohar,
On the  first and second eigenvalue of finite and infinite uniform hypergraphs,
{\em Proc. Amer. Math. Soc.} {\bf 147} (2019), 933--946.


\bibitem{Li} H. Li, L. Su, and S. Fallat,
On a relationship between the characteristic and matching polynomials of a uniform hypertree,
{\em Discrete Math.} {\bf 347} (2024), 113915.


\bibitem{Lim} L.-H. Lim,
Singular values and eigenvalues of tensors: a variational approach, in:
{\em Proceedings of the IEEE International Workshop on Computational Advances in Multi-Sensor Adaptive Processing},
vol. 1, CAMSAP'05, pages 129--132, 2005.



\bibitem{Lovasz} L. Lov\'asz and M.D. Plummer,
Matching Theory,
Annals of Discrete Mathematics, vol. 29, North Holland, 1986.


\bibitem{LovaszPe} L. Lov\'asz and J. Pelik\'an,
On the eigenvalues of trees,
{\em Period. Math. Hungar.} {\bf 3} (1973), 175--182.




\bibitem{LPS} A. Lubotzky, R. Phillips, and P. Sarnark,
Ramanujan graphs,
{\em Combinatoria} {\bf 8} (1988), 261--277.

\bibitem{Makowskya} J.A. Makowsky, E.V. Ravve, and N.K. Blanchard,
On the location of roots of graph polynomials,
{\em European J. Combin.} {\bf 41} (2014), 1--19.

\bibitem{Marcus}
A.W.  Marcus, D.A.  Spielman, and N. Srivastava,
Interlacing families I: Bipartite Ramanujan graphs of all degrees,
{\em Ann. of Math.} {\bf 182} (2015), 307--325.




\bibitem{Mowshowitz} A. Mowshowitz,
The characteristic polynomial of a graph,
{\em J. Combin. Theory Ser. B.} {\bf 12} (1972), 177--193.













\bibitem{Qi} L. Qi,
Eigenvalues of a real supersymmetric tensor,
{\em J. Symbolic Comput.} {\bf 40} (2005), 1302--1324.





\bibitem{Shi} Y. Shi, M. Dehmer, X. Li, and I. Gutman,
Graph Polynomials,
CRC Press, 2016.


\bibitem{Stanley89} R.P. Stanley,
Log-concave and unimodal sequences in algebra, combinatorics, and geometry,
in Graph Theory and its Applications:
East and West, Annals of the New York Academy of Sciences 576,
New York Acad. Sci., New York, 1989, pp. 500--535.

\bibitem{Suejc} L. Su, L. Kang, H. Li, and E. Shan,
The matching polynomials and spectral radii of uniform supertrees,
{\em Electron. J. Combin.} {\bf 25(4)} (2018), $\#$P4.13.











\bibitem{Zhang} W. Zhang, L. Kang, E. Shan, and Y. Bai,
The spectra of uniform hypertrees,
{\em Linear Algebra Appl.} {\bf 533} (2017), 84--94.




\end{thebibliography}
\end{document}